\pdfoutput=1
\documentclass[pdflatex,sn-mathphys-num]{sn-jnl}

\usepackage[utf8]{inputenc}
\usepackage[T1]{fontenc}
\usepackage{lmodern}
\usepackage{microtype}
\usepackage{amsmath,amssymb,amsfonts}
\usepackage{amsthm,mathtools}
\usepackage{mathrsfs}
\usepackage{enumitem}

\hypersetup{
  colorlinks = true,
  linkcolor  = blue,
  citecolor  = blue,
  urlcolor   = blue,
  bookmarksdepth = 3,
  pdftitle   = {Hypersurfaces containing involutive cones in projective symplectic spaces},
  pdfauthor  = {Gabriel A. Guedes},
  pdfsubject = {Algebraic geometry; MSC 14N10, 14N05, 14C17, 53D10},
  pdfkeywords = {involutive varieties, symplectic polarity, hypersurfaces containing cones,
    enumerative geometry, incidence varieties, Segre classes}
}

\theoremstyle{thmstyleone}
\newtheorem{theorem}{Theorem}[section]
\newtheorem{proposition}[theorem]{Proposition}
\newtheorem{lemma}[theorem]{Lemma}
\newtheorem{corollary}[theorem]{Corollary}

\theoremstyle{thmstylethree}
\newtheorem{definition}[theorem]{Definition}

\theoremstyle{thmstyletwo}
\newtheorem{remark}[theorem]{Remark}

\DeclareMathOperator{\rk}{rk}

\DeclareMathOperator{\codim}{codim}

\DeclareMathOperator{\Sym}{Sym}
\DeclareMathOperator{\Sp}{Sp}

\newcommand{\PP}{\mathbb P}
\newcommand{\CC}{\mathbb C}
\newcommand{\calF}{\mathcal F}
\newcommand{\calG}{\mathcal G}
\newcommand{\calQ}{\mathcal Q}
\newcommand{\calO}{\mathcal O}
\newcommand{\dualPP}{\check{\mathbb P}}

\begin{document}

\title[Hypersurfaces containing involutive cones]{Hypersurfaces containing involutive cones
in projective symplectic spaces}

\author*[1]{\fnm{Gabriel A.} \sur{Guedes}}\email{gabriel.guedes@ufrpe.br}

\affil*[1]{\orgdiv{Departamento de Matem\'atica},
\orgname{Universidade Federal Rural de Pernambuco},
\orgaddress{\street{Rua Dom Manoel de Medeiros, s/n, Dois Irm\~aos},
\city{Recife}, \postcode{52171-900}, \state{Pernambuco}, \country{Brazil}};
\href{https://orcid.org/0009-0008-8793-5250}{ORCID: 0009-0008-8793-5250}}

\abstract{Let $V$ be a complex symplectic vector space and let $\PP(V)$ carry the induced
contact structure and symplectic polarity. We study the loci of hypersurfaces containing
codimension-two cones supported on hyperplanes. The starting point is the following
characterization: a projective subvariety $X\subset \PP(V)$ which is a hypersurface of
degree at least two in a hyperplane $H$ is involutive if and only if it is a cone whose
vertex contains the polar point $\sigma(H)$. We give a short proof, construct the
corresponding incidence spaces, compute their dimensions and express the incidence degrees
as Segre-class integrals. In $\PP^3$ the incidence map is birational for every $d\ge m\ge2$
except $(m,d)=(2,2)$, and the same holds in $\PP^5$; the quadratic case is exceptional in
every dimension, the incidence having generic degree $2n$ in $\PP^{2n-1}$. We also give a
sufficient criterion for birationality in higher dimension, closed formulas in $\PP^3$ and
coefficient formulas in $\PP^5$. For quadric cones in $\PP^5$ the degree is the product of
shifted binomial factors and an irreducible polynomial of degree $36$, and we explain where
the shifted factors come from. All formulas and tables are checked by exact computer
algebra.}

\keywords{involutive varieties, symplectic polarity, hypersurfaces containing cones,
enumerative geometry, incidence varieties, Segre classes}

\pacs[MSC Classification]{Primary 14N10; Secondary 14N05, 14C17, 53D10}

\maketitle

\section{Introduction}

Let $(V,\omega)$ be a complex symplectic vector space of dimension $2n$.  Although the
odd-dimensional projective space $\PP(V)$ is not itself symplectic, the form $\omega$
induces on it a contact distribution and a projective polarity.  A projective subvariety is
called involutive when the homogeneous ideal of its affine cone is closed under the Poisson
bracket associated with $\omega$.  Such varieties arose as characteristic varieties of
modules over Weyl algebras; see
\cite{BernsteinLunts1988,CoutinhoLevcovitz1997,Coutinho2000,LevcovitzMcCune2002,
AlmeidaCoutinho2005,CoutinhoSaccomori2016}, and \cite{McCune2001} for the nonexistence of
involutive curves on a generic cubic surface.

For an irreducible homogeneous involutive variety contained in a hyperplane, Coutinho's
Lemma~3.1 \cite{Coutinho2000} gives, after a symplectic change of coordinates, a
decomposition $X_0\times\CC\times\{0\}$ in which the free factor is the symplectic orthogonal
of the hyperplane; projectively, the variety is a cone whose vertex contains the polar point
of the hyperplane.  The same holds for reducible varieties, since the components of an
involutive variety are involutive \cite[p.~197]{Coutinho2000}.  In $\PP^3$ this says that an
involutive plane curve is a union of lines through the polar point of its plane
\cite{LevcovitzVainsencher2011}.  The converse, in codimension two and any odd dimension,
was proved in the author's thesis \cite[Proposition~3.1.3, pp.~51--52]{Guedes2014}; see also
\cite[Proposition~1.15 and Propositions~2.19, 2.25]{Medeiros2012} for a coordinate treatment
and for the case of $\PP^3$.  Section~3 states the resulting characterization and gives a
short proof; everything else in the paper rests on it.  Note that being a cone is not enough:
the symplectic structure prescribes where the vertex has to be.

The enumerative question is then: what is the degree of the locus of hypersurfaces of
degree $d$ containing an involutive cone of degree $m$?  We answer it by building a
projective bundle over the parameter space of such cones, first with an arbitrary marked
vertex and then with the vertex fixed by the polarity, and computing a top Segre class.
Hypersurfaces containing prescribed curves, cones and other subvarieties have been counted
in the same way in
\cite{LVX2007,MeirelesXavierRojas2010,MaiaVainsencher2011,MSVX2013,CLV2014,Vainsencher2014};
we follow the conventions of \cite{Fulton1998,EisenbudHarris2016}, and refer to
\cite{AraujoVainsencher2001,EdidinGraham1998a,EdidinGraham1998b,EllingsrudStromme1996} for
the localization methods often used in such counts.  For the enumeration of contact curves
see \cite{Muratore2023Torus,Muratore2023Graph}.

The codimension-two converse, the parameter spaces and several computations in low
dimension first appeared in the author's thesis \cite{Guedes2014}.  Here we give a complete
account of that material and go further.  Section~2 recalls involutivity and the symplectic
polarity, and Section~3 proves the characterization.  Sections~4 and~5 construct the
incidence space of hypersurfaces containing a cone with a marked vertex and express its
degree as a Segre-class integral, keeping track of the generic degree of the incidence map;
Section~6 imposes the polarity.  Section~7 shows that in $\PP^3$ the incidence is birational
for all $d\ge m\ge2$ except $(m,d)=(2,2)$, and identifies the quadratic exception, in every
odd dimension, with the eigenline incidence of Hamiltonian endomorphisms, of generic degree
$2n$.  Section~8 gives a sufficient criterion for birationality in every dimension, treats
the boundary case $d=m$ directly and determines the birational range in $\PP^5$ completely.
Sections~9 and~10 give closed formulas in $\PP^3$ and coefficient formulas in $\PP^5$; for
quadric cones in $\PP^5$ the degree factors as a product of shifted binomial factors and an
irreducible polynomial of degree $36$, and we explain the shifted factors in general.
Section~11 describes the computer verification of all identities, and Section~12 collects
some remarks.

\section{Involutive varieties and symplectic polarity}

Throughout the paper we work over $\CC$ and assume $n\ge2$.  Let
$V=\CC^{2n}$, with coordinates
$$
(x_1,\ldots,x_n,y_1,\ldots,y_n),
$$
and standard symplectic form
$$
\omega=\sum_{i=1}^n dx_i\wedge dy_i.
$$
The associated Poisson bracket on $\CC[V]$ is
$$
\{f,g\}=\sum_{i=1}^n\left(
\frac{\partial f}{\partial x_i}\frac{\partial g}{\partial y_i}
-
\frac{\partial f}{\partial y_i}\frac{\partial g}{\partial x_i}
\right).
$$

\begin{definition}
Let $X\subset \PP(V)$ be a projective algebraic subvariety, possibly reducible, and let
$I(X)\subset \CC[V]$ be its saturated homogeneous ideal.  We call $X$ \emph{involutive} if
$$
\{I(X),I(X)\}\subset I(X).
$$
Equivalently, the affine cone over $X$ is coisotropic at its smooth points.
\end{definition}

\begin{definition}[Involutive linear subspaces]
\label{def:linear-involutive}
A vector subspace $U\subset V$ is \emph{involutive} (or coisotropic) if $U^\perp\subset
U$.  Its projectivization $\PP(U)$ is then an involutive projective linear subspace.
\end{definition}

\begin{remark}
Since $\dim U+\dim U^\perp=2n$, an involutive subspace satisfies $\dim U\ge n$.  Equality
holds precisely when $U=U^\perp$, that is, when $U$ is Lagrangian.
\end{remark}

A hyperplane $H=V(\ell)\subset \PP(V)$ determines a unique vector $v_\ell\in V$, up to
scalar, by
$$
\ell(u)=\omega(v_\ell,u)\qquad(u\in V).
$$
The resulting isomorphism
$$
\sigma:\dualPP(V)\longrightarrow \PP(V),\qquad H\longmapsto[v_\ell],
$$
is the \emph{symplectic polarity}; we call $\sigma(H)$ the polar point of $H$.  Since
$\omega(v_\ell,v_\ell)=0$, one has $\sigma(H)\in H$.  After a symplectic change of
coordinates, any hyperplane has the form
$$
H=V(y_n),\qquad \sigma(H)=[0:\cdots:0:1:0:\cdots:0],
$$
where the nonzero coordinate is $x_n$.

Coutinho's Lemma~3.1 gives a product decomposition for homogeneous involutive varieties in a
hyperplane \cite{Coutinho2000}.  Applied to affine cones and written in the coordinates above,
it yields the following statement.

\begin{theorem}[Coutinho's product decomposition]
\label{thm:coutinho-product}
Let $X\subset\PP(V)$ be an irreducible involutive projective variety contained in a
hyperplane $H$.  After a symplectic change of coordinates taking $H$ to $V(y_n)$, the affine
cone $\widehat X$ has the form
$$
\widehat X=X_0\times\CC\times\{0\}
\subset \CC^{2n-2}\times\CC_{x_n}\times\CC_{y_n},
$$
where $X_0\subset\CC^{2n-2}$ is irreducible, homogeneous, and involutive.  In particular,
$X$ is a cone whose vertex contains $\sigma(H)$.
\end{theorem}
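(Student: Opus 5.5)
The plan is to use the Poisson bracket with the linear form $y_n$ cutting out $H$. Choose symplectic coordinates with $H=V(y_n)$, as recalled above. Since $X\subset H$, we have $y_n\in I(X)$. Involutivity gives, for every $f\in I(X)$,
$$
\{y_n,f\}=-\frac{\partial f}{\partial x_n}\in I(X).
$$
So $I(X)$ is closed under $\partial/\partial x_n$. The Hamiltonian vector field of $y_n$ is (up to sign) $\partial/\partial x_n$, which is the translation direction $v_\ell$ defining $\sigma(H)$. Therefore $\widehat X$ should be invariant under translations along the $x_n$-axis.

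To make this precise, I would show that an ideal $I$ stable under a derivation $D=\partial/\partial x_n$ with constant coefficients defines a variety stable under the flow $u\mapsto u+te_{x_n}$. For $f\in I$, Taylor's formula gives
$$
f(u+te_{x_n})=\sum_k \frac{t^k}{k!}\,\partial_{x_n}^k f(u).
$$
Each $\partial_{x_n}^k f$ lies in $I$, so it vanishes on $\widehat X$, and hence $u+te_{x_n}\in\widehat X$ for all $t$. Thus $\widehat X+\CC e_{x_n}=\widehat X$, and $\widehat X\subset\{y_n=0\}$.

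Next, write $\widehat X=X_0\times\CC\times\{0\}$ with $X_0$ the image of $\widehat X$ under projection to $\CC^{2n-2}$, which is the same as the slice $\widehat X\cap\{x_n=y_n=0\}$. Translation invariance makes $X_0$ closed, homogeneous and irreducible, the last because it is the image of an irreducible set. For involutivity of $X_0$, note that $I(\widehat X)$ is generated by $y_n$ together with the ideal $I_0=I(X_0)\subset\CC[x_1,\dots,y_{n-1}]$, extended to the full ring. The bracket of two elements of $I_0$ only involves the first $n-1$ coordinate pairs. So $\{I_0,I_0\}\subset I(\widehat X)\cap\CC[x_1,\dots,y_{n-1}]=I_0$.

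Projectively, the translation invariance says that $X$ is a cone with $[e_{x_n}]=\sigma(H)$ in its vertex. The main point of care is checking that $v_\ell$ for $\ell=y_n$ is indeed $e_{x_n}$. This holds because $\omega(e_{x_n},u)=dx_n\wedge dy_n(e_{x_n},u)=y_n(u)$. The other point of care is the identification of $I(\widehat X)$ with the extended ideal $(I_0)+(y_n)$, which follows from the product structure since $I(X_0\times\CC)=I_0\cdot\CC[\ldots,x_n]$.
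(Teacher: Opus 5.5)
Your proof is correct, and it does more than the paper, which gives no proof of this statement. The paper takes it from Coutinho's Lemma~3.1 and extends it to reducible $X$ by citing that the components of an involutive variety are involutive. The only argument of this kind written out in the paper is the codimension-two implication (i)$\Rightarrow$(ii) of Theorem~\ref{thm:cone-characterization}.

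That proof uses the same identity $\{y_n,F\}=-\partial F/\partial x_n$, but applies it only to the single generator $F$ of $I(X)=(y_n,F)$. It then concludes by a degree comparison: the degree-$(m-1)$ part of $(y_n,F)$ consists of multiples of $y_n$, so $\partial F/\partial x_n=0$.

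You instead use that the whole ideal is stable under $\partial/\partial x_n$, and turn this, via the finite Taylor expansion, into invariance of $\widehat X$ under translation by $e_{x_n}=v_{y_n}$. This needs no knowledge of generators and works in every codimension. It also yields the full product decomposition together with the involutivity of $X_0$. For that last point you only need $I_0\subset I(\widehat X)$ and $I(\widehat X)\cap\CC[x_1,\dots,y_{n-1}]=I_0$. The full identification $I(\widehat X)=(I_0)+(y_n)$ is not needed here; it is what the converse would use.

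Your translation step also never uses irreducibility, so it gives the cone conclusion for reducible $X$ directly, without the componentwise reduction. Irreducibility enters only to make $X_0$ irreducible.

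What the paper's generator argument buys in return is that, in codimension two, the converse (ii)$\Rightarrow$(i) follows at once from the biderivation property. In higher codimension the converse needs exactly the extra information that $X_0$ is involutive, as the paper's example $V(y_3,x_1,y_1)\subset\PP^5$ shows.
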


Coutinho states the lemma for irreducible varieties; since the irreducible components of an
involutive variety are involutive \cite[p.~197]{Coutinho2000}, the conclusion holds
componentwise.  For plane curves this is the union-of-lines statement of
\cite{LevcovitzVainsencher2011}; see also \cite[Proposition~1.15 and Propositions~2.19,
2.25]{Medeiros2012}.  The converse, in codimension two, is
\cite[Proposition~3.1.3, pp.~51--52]{Guedes2014}.  We prove both directions together in the
next section.

\section{The polar-vertex characterization in codimension two}

\begin{definition}
Let $H\simeq\PP^r$ be a hyperplane, let $p\in H$, and let $X=V_H(F)\subset H$ be a
projective hypersurface.  We say that $X$ is a \emph{cone whose vertex contains $p$} if,
after choosing homogeneous coordinates $[z_0:\cdots:z_r]$ on $H$ with
$p=[1:0:\cdots:0]$, the equation $F$ is independent of $z_0$.  Equivalently, every line
joining $p$ to a point of $X$ is contained in $X$.
\end{definition}

\begin{theorem}[Polar-vertex characterization in codimension two]
\label{thm:cone-characterization}
Let $X\subset \PP^{2n-1}$ be a projective subvariety, possibly reducible, of codimension
two.  Assume that $X$ is contained in a hyperplane $H$ and has degree $m\ge2$ as a
hypersurface of $H$.  Then the following are equivalent:
\begin{enumerate}[label=\textup{(\roman*)}]
\item $X$ is involutive;
\item $X$ is a cone whose vertex contains the polar point $\sigma(H)$.
\end{enumerate}
\end{theorem}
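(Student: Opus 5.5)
Normalize coordinates first. After a symplectic change of coordinates we may assume $H=V(y_n)$ and $\sigma(H)=[0:\cdots:0:1:0:\cdots:0]$, the point where only $x_n$ is nonzero. Since $X$ is a hypersurface of degree $m$ in $H$, its saturated ideal is
$$
I(X)=(y_n,F),\qquad F\in\CC[x_1,\ldots,x_n,y_1,\ldots,y_{n-1}]
$$
homogeneous of degree $m$ and reduced, because $X$ is a variety. Here "cone with vertex containing $\sigma(H)$" means exactly that $F$ is independent of $x_n$. Everything then reduces to a bracket computation with this ideal, and I would do both directions on these generators.

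For (ii)$\Rightarrow$(i), assume $\partial F/\partial x_n=0$. The ideal is generated by $y_n$ and $F$, and the bracket is a biderivation, so closure of $I(X)$ follows from closure on generators. Indeed, $\{ay_n+bF,\,cy_n+eF\}$ expands by Leibniz into terms that are multiples of $y_n$ or $F$, plus the terms $ac\{y_n,y_n\}$, $ae\{y_n,F\}$, $bc\{F,y_n\}$ and $be\{F,F\}$. Now $\{y_n,y_n\}=\{F,F\}=0$ by antisymmetry, and
$$
\{F,y_n\}=\frac{\partial F}{\partial x_n}=0 .
$$
Hence $\{I(X),I(X)\}\subset I(X)$.

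For (i)$\Rightarrow$(ii), I would use the same computation in reverse. Involutivity gives $\{F,y_n\}=\partial F/\partial x_n\in I(X)=(y_n,F)$. Since $F$ does not involve $y_n$, we can set $y_n=0$, which shows $G:=\partial F/\partial x_n$ lies in $(F)$ inside $\CC[x_1,\ldots,x_n,y_1,\ldots,y_{n-1}]$. But $G$ is either zero or homogeneous of degree $m-1<m$, so it cannot be a nonzero multiple of $F$. Therefore $G=0$, and $X$ is a cone with vertex containing $\sigma(H)$. Alternatively one could cite Theorem~\ref{thm:coutinho-product} componentwise, but the direct argument is shorter and handles reducible $X$ at once.

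The main obstacle is justifying that the saturated ideal is exactly $(y_n,F)$ with $F$ reduced. This needs $X$ to be a reduced subscheme of pure codimension two lying in $H$, so that its ideal in $\CC[V]/(y_n)$ is principal (a reduced pure hypersurface in $H$). I would state this explicitly as the standing interpretation of "degree $m$ as a hypersurface of $H$". The hypothesis $m\ge2$ only serves to keep $X$ from being a linear space. For $m=1$ the same computation applies but gives a different vertex condition, and linear involutivity is handled by Definition~\ref{def:linear-involutive}. So I would remark that the degree argument itself works for all $m\ge1$.
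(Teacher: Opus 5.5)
Your proposal is correct and is essentially the paper's proof. You normalize to $H=V(y_n)$ with $I(X)=(y_n,F)$ and $F$ free of $y_n$. You then force $\partial F/\partial x_n=0$ from $\{F,y_n\}\in(y_n,F)$ by comparing degrees. For the converse you check the brackets of the two generators and use that the Poisson bracket is a biderivation.

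Your explicit note that $I(X)=(y_n,F)$ relies on $X$ being reduced and of pure codimension one in $H$ makes visible an assumption the paper leaves inside the phrase ``hypersurface of $H$''.
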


\begin{proof}
The two conditions and their relative incidence are invariant under the action of $\Sp(V)$,
because
$$
g\bigl(\sigma(H)\bigr)=\sigma(gH),\qquad g\in\Sp(V).
$$
We may therefore assume that $H=V(y_n)$, so that $\sigma(H)$ is the $x_n$-coordinate
point.  Since $X$ is a hypersurface in $H$, its homogeneous ideal is
$$
I(X)=(y_n,F),
$$
where the class of $F$ modulo $y_n$ is a nonzero degree-$m$ form.  Replacing $F$ by
that representative, we assume that $F$ is independent of $y_n$.

Suppose first that $X$ is involutive.  Then
$$
\{y_n,F\}=-\frac{\partial F}{\partial x_n}\in(y_n,F).
$$
The left-hand side has degree $m-1$, whereas $F$ has degree $m$.  Hence the
degree-$(m-1)$ part of the ideal contributes only multiples of $y_n$, and
$$
\frac{\partial F}{\partial x_n}\in(y_n).
$$
Because this derivative is independent of $y_n$, it vanishes.  In characteristic zero, $F$
is therefore independent of $x_n$, so $X$ is a cone whose vertex contains $\sigma(H)$.

Conversely, if the vertex contains $\sigma(H)$, the equation $F$ is independent of
$x_n$.  Thus
$$
\{y_n,F\}=-\frac{\partial F}{\partial x_n}=0,
$$
and of course $\{y_n,y_n\}=\{F,F\}=0$.  Since the Poisson bracket is a biderivation,
$(y_n,F)$ is involutive.
\end{proof}

\begin{remark}
The implication \textup{(i)}$\Rightarrow$\textup{(ii)} is the codimension-two case of
Coutinho's product decomposition \cite[Lemma~3.1]{Coutinho2000}, and
\textup{(ii)}$\Rightarrow$\textup{(i)} is \cite[Proposition~3.1.3, pp.~51--52]{Guedes2014};
the proof above treats both at once, for a possibly reducible $X$.  The hypothesis $m\ge2$
is not used: for $m=1$ the argument shows that the linear subspace $V(y_n,F)$ is involutive
if and only if $\partial F/\partial x_n=0$, as in Definition~\ref{def:linear-involutive}.
We keep it because only cones of degree at least two occur below.
\end{remark}

\begin{remark}[Why codimension two is essential]
The statement fails in larger codimension.  In $\PP^5$, with coordinates
$(x_1,x_2,x_3,y_1,y_2,y_3)$, let
$$
H=V(y_3),\qquad \sigma(H)=[0:0:1:0:0:0],
$$
and take
$$
X=V(y_3,x_1,y_1).
$$
This linear cone has vertex containing $\sigma(H)$, but
$$
\{x_1,y_1\}=1\notin I(X),
$$
so it is not involutive.
\end{remark}

\section{The incidence of hypersurfaces containing cones}

We use the convention that $\PP(E)$ parametrizes one-dimensional subspaces of
the fibers of a vector bundle $E$. Thus, if $\pi:\PP(E)\to B$ is the projection, then
$$
\calO_{\PP(E)}(-1)\subset\pi^*E
$$
is the tautological line subbundle. All bundles below are implicitly pulled back along the
evident structure morphisms. We abbreviate $\calO_{\PP(E)}(\pm1)$ to $\calO_E(\pm1)$
whenever no confusion can arise.

Let $\calF=V^\vee\otimes \calO_{\dualPP^{2n-1}}$ be the trivial bundle of linear forms over
$\dualPP^{2n-1}$. On $\dualPP^{2n-1}$ we have the tautological exact sequence
\begin{equation}
\label{eq:dual-tautological}
0\longrightarrow \calO_{\dualPP^{2n-1}}(-1)\longrightarrow \calF\longrightarrow
\calQ\longrightarrow 0.
\end{equation}
The quotient bundle $\calQ$ has rank $2n-1$. The projective bundle $\PP(\calQ^\vee)$
parametrizes pairs $(H,p)$, where $H$ is a hyperplane and $p\in H$.

On $\PP(\calQ^\vee)$ there is an exact sequence
\begin{equation}
\label{eq:G-sequence}
0\longrightarrow \calG\longrightarrow \calQ\longrightarrow
\calO_{\calQ^\vee}(1)\longrightarrow 0.
\end{equation}
For a point $(H,p)$, the fiber $\calG_{(H,p)}$ consists of linear forms on $H$ vanishing
at $p$. Thus a point of $\PP(\Sym^m\calG)$ corresponds to a triple $(H,p,[F_m])$, where
$F_m$ is a degree $m$ form on $H$ defining a cone whose vertex contains $p$.

Set
$$
\mathbb X_m=\PP(\Sym^m\calG).
$$
Then $\mathbb X_m$ is the parameter space of hyperplane-supported cones of degree $m$,
with an arbitrary marked point contained in the vertex.

Let $d\ge m$. We use the convention $\Sym^kE=0$ for $k<0$. Over $\mathbb X_m$, the
tautological line
$$
\calO_{\Sym^m\calG}(-1)\subset \Sym^m\calG
$$
induces an inclusion
$$
\calO_{\Sym^m\calG}(-1)\otimes \Sym^{d-m}\calQ
\hookrightarrow
\Sym^d\calQ.
$$
Together with the symmetric power of \eqref{eq:dual-tautological},
$$
\calO_{\dualPP^{2n-1}}(-1)\otimes \Sym^{d-1}\calF
\hookrightarrow
\Sym^d\calF
\twoheadrightarrow
\Sym^d\calQ,
$$
we define $\mathcal F^m_d\subset \Sym^d\calF$ as the inverse image of
$$
\calO_{\Sym^m\calG}(-1)\otimes \Sym^{d-m}\calQ
\subset
\Sym^d\calQ.
$$
Equivalently, $\mathcal F^m_d$ fits into the exact sequence
\begin{equation}
\label{eq:Fd-exact}
0\longrightarrow
\calO_{\dualPP^{2n-1}}(-1)\otimes \Sym^{d-1}\calF
\longrightarrow
\mathcal F^m_d
\longrightarrow
\calO_{\Sym^m\calG}(-1)\otimes \Sym^{d-m}\calQ
\longrightarrow 0.
\end{equation}

\begin{definition}
Let $\mathbb W^m_d\subset \PP(\Sym^d V^\vee)$ be the image of the incidence projection
$$
\rho_m:\PP(\mathcal F^m_d)\longrightarrow \PP(\Sym^d V^\vee).
$$
Thus $\mathbb W^m_d$ parametrizes hypersurfaces of degree $d$ containing at least one
hyperplane-supported cone of degree $m$.
\end{definition}

\begin{proposition}
\label{prop:rank-dim-noninvolutive}
The rank of $\mathcal F^m_d$ is
$$
\rk(\mathcal F^m_d)=
\binom{d+2n-2}{2n-1}
+
\binom{d-m+2n-2}{2n-2}.
$$
Moreover,
$$
\dim \mathbb X_m=\binom{m+2n-3}{m}+4n-4.
$$
If $\rho_m$ is generically finite onto its image, then
$$
\dim \mathbb W^m_d=
\binom{d+2n-2}{2n-1}
+
\binom{d-m+2n-2}{2n-2}
+
\binom{m+2n-3}{m}+4n-5.
$$
\end{proposition}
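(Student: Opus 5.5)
The rank and the dimension of $\mathbb X_m$ follow from additivity of ranks along the exact sequences \eqref{eq:dual-tautological}, \eqref{eq:G-sequence} and \eqref{eq:Fd-exact}, together with the usual count of monomials. The dimension of $\mathbb W^m_d$ then comes from the dimension of the total space $\PP(\mathcal F^m_d)$. I would proceed in three steps.

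First, the rank of $\mathcal F^m_d$. From \eqref{eq:Fd-exact},
$$
\rk\mathcal F^m_d=\rk\Sym^{d-1}\calF+\rk\Sym^{d-m}\calQ .
$$
Since $\calF$ has rank $2n$, we get $\rk\Sym^{d-1}\calF=\binom{d-1+2n-1}{2n-1}=\binom{d+2n-2}{2n-1}$. Since $\calQ$ has rank $2n-1$, we get $\rk\Sym^{d-m}\calQ=\binom{d-m+2n-2}{2n-2}$. This is the stated formula. As a sanity check, for $d=m$ the second summand is $1$: the defining form is determined modulo $\ell$ up to scalar, as expected.

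Second, the dimension of $\mathbb X_m$. Here I use that
$$
\dim \mathbb X_m=\dim\dualPP^{2n-1}+\bigl(\rk\calQ^\vee-1\bigr)+\bigl(\rk\Sym^m\calG-1\bigr).
$$
From \eqref{eq:G-sequence}, $\calG$ has rank $2n-2$. Hence $\rk\Sym^m\calG=\binom{m+2n-3}{m}$. The dimension is therefore
$$
(2n-1)+(2n-2)+\binom{m+2n-3}{m}-1=\binom{m+2n-3}{m}+4n-4 .
$$
Geometrically this counts the choice of hyperplane, then the marked point in it, then a cone form in the $2n-2$ linear forms on $H$ vanishing at $p$.

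Third, the dimension of $\mathbb W^m_d$. The space $\PP(\mathcal F^m_d)$ is a projective bundle over the irreducible base $\mathbb X_m$, so it is irreducible of dimension $\dim\mathbb X_m+\rk\mathcal F^m_d-1$. The image under a proper morphism that is generically finite onto its image has the same dimension. Substituting gives
$$
\binom{d+2n-2}{2n-1}+\binom{d-m+2n-2}{2n-2}+\binom{m+2n-3}{m}+4n-4-1,
$$
which is the claimed value.

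The only point needing care is the well-definedness of $\mathcal F^m_d$ as a vector bundle of the stated rank. That is, the map $\calO_{\Sym^m\calG}(-1)\otimes\Sym^{d-m}\calQ\to\Sym^d\calQ$ must be an injection of vector bundles, so that its preimage in $\Sym^d\calF$ is a subbundle. I would check this fiberwise: multiplication by a nonzero form $F_m$ on the polynomial ring of $H$ is injective, since that ring is a domain. The rank is therefore constant, so the image is a subbundle, and \eqref{eq:Fd-exact} is exact. I expect no real obstacle beyond this point.
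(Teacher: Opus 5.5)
Your proposal is correct and follows the paper's proof essentially line by line. You obtain the rank from \eqref{eq:Fd-exact} by counting monomials, get $\dim\mathbb X_m$ as $4n-3$ plus the fiber dimension $\binom{m+2n-3}{m}-1$, and identify $\dim\mathbb W^m_d$ with $\dim\PP(\mathcal F^m_d)$ under generic finiteness. Your fiberwise check that multiplication by $F_m$ is injective, which makes $\mathcal F^m_d$ a genuine subbundle of the stated rank, is a point the paper takes for granted and a welcome addition.
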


\begin{remark}
\label{rem:corrected-ranks-thesis}
The rank formulas use $\rk\calF=2n$ and $\rk\calQ=2n-1$. They are the corrected versions
of the corresponding binomial factors in \cite[Thms. 3.1.4 and 3.1.6]{Guedes2014}, where the
printed indices are shifted by one.
\end{remark}

\begin{proof}
The rank formula follows from \eqref{eq:Fd-exact}. Since $\calF$ has rank $2n$,
$$
\rk\Sym^{d-1}\calF=\binom{d+2n-2}{2n-1}.
$$
Since $\calQ$ has rank $2n-1$,
$$
\rk\Sym^{d-m}\calQ=\binom{d-m+2n-2}{2n-2}.
$$
The base $\PP(\calQ^\vee)$ has dimension
$$
(2n-1)+(2n-2)=4n-3.
$$
The fiber of $\PP(\Sym^m\calG)\to \PP(\calQ^\vee)$ has dimension
$$
\binom{m+2n-3}{m}-1,
$$
because $\calG$ has rank $2n-2$. This gives the formula for $\dim \mathbb X_m$. Finally,
$$
\dim \PP(\mathcal F^m_d)=\dim\mathbb X_m+\rk(\mathcal F^m_d)-1,
$$
and generic finiteness gives the dimension of the image.
\end{proof}

\section{Segre-class degree formula}

Let $\delta_m(d)$ denote the generic degree of $\rho_m:\PP(\mathcal F^m_d)\to \mathbb
W^m_d$, when this map is generically finite.

\begin{theorem}
\label{thm:degree-noninvolutive}
Assume that $\rho_m$ is generically finite onto $\mathbb W^m_d$, with generic degree
$\delta_m(d)$. Then
$$
\delta_m(d)\deg(\mathbb W^m_d)
=
\int_{\mathbb X_m}s_{\dim\mathbb X_m}(\mathcal F^m_d)\cap[\mathbb X_m].
$$
In particular, if $\rho_m$ is birational, the integral is $\deg(\mathbb W^m_d)$.
\end{theorem}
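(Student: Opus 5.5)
The plan is the standard projection-formula argument for incidence loci that are images of projective subbundles of a trivial bundle. Write $N=\dim\PP(\mathcal F^m_d)=\dim\mathbb X_m+\rk(\mathcal F^m_d)-1$, which by Proposition~\ref{prop:rank-dim-noninvolutive} and generic finiteness equals $\dim\mathbb W^m_d$. Since $\mathcal F^m_d$ is a subbundle of the trivial bundle $\Sym^d\calF=\Sym^dV^\vee\otimes\calO$, we have $\PP(\mathcal F^m_d)\subset\mathbb X_m\times\PP(\Sym^dV^\vee)$. The map $\rho_m$ is the second projection, and the tautological line satisfies
$$
\calO_{\mathcal F^m_d}(-1)=\rho_m^*\calO_{\PP(\Sym^dV^\vee)}(-1).
$$
First I will check that \eqref{eq:Fd-exact} really exhibits $\mathcal F^m_d$ as a subbundle, i.e.\ locally free with locally free quotient in $\Sym^d\calF$. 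It is the preimage of the subbundle $\calO_{\Sym^m\calG}(-1)\otimes\Sym^{d-m}\calQ\subset\Sym^d\calQ$. This inclusion is fibrewise injective because multiplication by a nonzero form in a polynomial ring is injective. So $\rho_m$ is a morphism, and setting $h=c_1(\calO_{\mathcal F^m_d}(1))$ we get $h=\rho_m^*H$ with $H$ the hyperplane class.

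Next I will compute $\deg\mathbb W^m_d$ by the projection formula:
$$
\int_{\PP(\mathcal F^m_d)}h^{N}=\int_{\PP(\Sym^dV^\vee)}H^N\cap\rho_{m*}[\PP(\mathcal F^m_d)]=\delta_m(d)\int H^N\cap[\mathbb W^m_d]=\delta_m(d)\deg\mathbb W^m_d .
$$
This uses $\rho_{m*}[\PP(\mathcal F^m_d)]=\delta_m(d)[\mathbb W^m_d]$, which is the definition of the generic degree of a generically finite proper map between varieties of equal dimension. Here $\PP(\mathcal F^m_d)$ is irreducible, being a projective bundle over $\mathbb X_m$. In turn $\mathbb X_m$ is a projective bundle over the irreducible $\PP(\calQ^\vee)$.

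Finally I will push forward to $\mathbb X_m$ with $\pi:\PP(\mathcal F^m_d)\to\mathbb X_m$. With the paper's convention (lines, $\calO(-1)\subset\pi^*E$), Fulton's definition of Segre classes gives
$$
\pi_*\bigl(h^{r-1+i}\cap\pi^*\alpha\bigr)=s_i(\mathcal F^m_d)\cap\alpha,\qquad r=\rk\mathcal F^m_d .
$$
Taking $i=\dim\mathbb X_m$ and $\alpha=[\mathbb X_m]$ yields
$$
\int h^N=\int_{\mathbb X_m}s_{\dim\mathbb X_m}(\mathcal F^m_d)\cap[\mathbb X_m].
$$
The only real point of care is the sign convention: Fulton's $s(E)$ is defined via $\PP(E)$ of lines with $\calO(1)$, so $s(E)=c(E)^{-1}$ with no dualization, and this matches the setup here. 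The birational case is then $\delta_m(d)=1$.
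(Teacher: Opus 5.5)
Your proof is correct and follows the paper's argument. The projection formula along $\rho_m$ identifies $\int_{\PP(\mathcal F^m_d)}\rho_m^*H^{N}$ with $\delta_m(d)\deg(\mathbb W^m_d)$, and the defining property of Segre classes for $\PP(\mathcal F^m_d)\to\mathbb X_m$ identifies the same integral with $\int_{\mathbb X_m}s_{\dim\mathbb X_m}(\mathcal F^m_d)\cap[\mathbb X_m]$. Your extra checks are details the paper leaves implicit: that $\mathcal F^m_d$ is a genuine subbundle, so $\rho_m$ is a morphism, that the source is irreducible, and that the lines convention gives $s=c^{-1}$.
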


\begin{proof}
Let $H=c_1(\calO_{\PP(\Sym^d V^\vee)}(1))$. By definition of degree and the projection
formula,
$$
\delta_m(d)\deg(\mathbb W^m_d)=
\int_{\PP(\mathcal F^m_d)}\rho_m^*H^{\dim \mathbb W^m_d}.
$$
The last integral is, by the definition of Segre classes of a projective bundle, equal to
$$
\int_{\mathbb X_m}s_{\dim\mathbb X_m}(\mathcal F^m_d)\cap[\mathbb X_m].
$$
\end{proof}

For computations, let
$$
h=c_1(\calO_{\dualPP^{2n-1}}(1)),\qquad
\ell=c_1(\calO_{\calQ^\vee}(1)),\qquad
x=c_1(\calO_{\Sym^m\calG}(1)).
$$
From \eqref{eq:Fd-exact}, using $\calF$ trivial, one obtains
\begin{equation}
\label{eq:segre-class}
s(\mathcal F^m_d)=
\frac{1}{(1-h)^{e_1}}
\frac{(1-h-x)^{e_{m+1}}}{(1-x)^{e_m}},
\end{equation}
where
$$
e_i=\rk(\Sym^{d-i}\calF)=\binom{d-i+2n-1}{2n-1}.
$$
In particular, the convention above gives $e_{m+1}=0$ when $d=m$.
The integral is computed modulo the standard projective-bundle relations. In particular,
$$
h^{2n}=0,
$$
and, on $\PP(\calQ^\vee)$,
\begin{equation}
\label{eq:second-relation}
\sum_{i=0}^{2n-1}(-1)^i h^{2n-1-i}\ell^i=0.
\end{equation}
The third relation comes from
$$
\calO\hookrightarrow \Sym^m\calG\otimes \calO_{\Sym^m\calG}(1),
$$
namely
\begin{equation}
\label{eq:third-relation}
c_{\rk(\Sym^m\calG)}(\Sym^m\calG\otimes \calO_{\Sym^m\calG}(1))=0.
\end{equation}

\section{The involutive cone incidence}

We now impose involutivity. By Theorem \ref{thm:cone-characterization}, it is equivalent to
require that the vertex of the cone contain the symplectic polar point of the supporting
hyperplane.

Define
$$
s_\sigma:\dualPP^{2n-1}\longrightarrow \PP(\calQ^\vee),\qquad
H\longmapsto (H,\sigma(H)).
$$
Let $\pi_m:\mathbb X_m\to\PP(\calQ^\vee)$ be the structure morphism. The compactified
parameter space for involutive cones is the fiber product
$$
\mathbb Y_m
=\dualPP^{2n-1}\times_{\PP(\calQ^\vee)}\mathbb X_m
=s_\sigma^*\mathbb X_m.
$$
Denote by
$$
\bar s_\sigma:\mathbb Y_m\longrightarrow\mathbb X_m
$$
the second projection. Equivalently,
$$
\mathbb Y_m=\PP(\Sym^m\calG_\sigma),
$$
where $\calG_\sigma=s_\sigma^*\calG$. We have
$$
\dim \mathbb Y_m=\binom{m+2n-3}{m}+2n-2.
$$
\begin{remark}
Let $\mathbb Y_m^{\mathrm{sf}}\subset\mathbb Y_m$ be the dense open subset parametrizing
square-free degree-$m$ forms. By Theorem~\ref{thm:cone-characterization}, its points
correspond to reduced involutive codimension-two subvarieties. The projective bundle
$\mathbb Y_m$ also contains nonreduced cones; we need the whole bundle in order to integrate.
\end{remark}
Let
$$
\widetilde{\mathcal F}^m_d=\bar s_\sigma^*\mathcal F^m_d
$$
be the pulled-back incidence bundle over $\mathbb Y_m$. Define $\mathbb B^m_d\subset
\PP(\Sym^dV^\vee)$ to be the image of
$$
\widetilde\rho_m:\PP(\widetilde{\mathcal F}^m_d)\longrightarrow \PP(\Sym^dV^\vee).
$$
Since
$$
\PP(\widetilde{\mathcal F}^m_d)|_{\mathbb Y_m^{\mathrm{sf}}}
\subset\PP(\widetilde{\mathcal F}^m_d)
$$
is dense and $\widetilde\rho_m$ is proper, $\mathbb B^m_d$ is the closure of the locus of
hypersurfaces containing a reduced involutive cone of degree $m$, in the sense of
Theorem~\ref{thm:cone-characterization}; points of the boundary may come from nonreduced
cones.

\begin{proposition}
\label{prop:dimension-involutive}
If $\widetilde\rho_m$ is generically finite onto its image, then
$$
\dim \mathbb B^m_d=
\binom{d+2n-2}{2n-1}
+
\binom{d-m+2n-2}{2n-2}
+
\binom{m+2n-3}{m}
+2n-3.
$$
\end{proposition}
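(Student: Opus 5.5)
The argument is a dimension count parallel to the proof of Proposition~\ref{prop:rank-dim-noninvolutive}. Under generic finiteness,
$$
\dim\mathbb B^m_d=\dim\PP(\widetilde{\mathcal F}^m_d)=\dim\mathbb Y_m+\rk(\widetilde{\mathcal F}^m_d)-1 ,
$$
so it suffices to know the dimension of the base $\mathbb Y_m$ and the rank of the pulled-back bundle.

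First, the rank. Pullback along $\bar s_\sigma$ preserves rank, so
$$
\rk\widetilde{\mathcal F}^m_d=\rk\mathcal F^m_d=\binom{d+2n-2}{2n-1}+\binom{d-m+2n-2}{2n-2}
$$
by Proposition~\ref{prop:rank-dim-noninvolutive}. Equivalently, pull back the exact sequence \eqref{eq:Fd-exact} and read off the ranks of $\Sym^{d-1}\calF$ and $\Sym^{d-m}\calQ$.

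Second, the dimension of $\mathbb Y_m$. Since $s_\sigma$ is a section of $\PP(\calQ^\vee)\to\dualPP^{2n-1}$ (indeed $\sigma(H)\in H$), $\mathbb Y_m=\PP(\Sym^m\calG_\sigma)$ is a projective bundle over $\dualPP^{2n-1}$. Its fibers have dimension $\binom{m+2n-3}{m}-1$, because $\calG_\sigma$ has rank $2n-2$. This gives
$$
\dim\mathbb Y_m=(2n-1)+\binom{m+2n-3}{m}-1=\binom{m+2n-3}{m}+2n-2,
$$
which agrees with the value stated just before the proposition.

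Third, combine the two. We get $\dim\PP(\widetilde{\mathcal F}^m_d)=\binom{m+2n-3}{m}+2n-2+\rk\mathcal F^m_d-1$. If $\widetilde\rho_m$ is generically finite, the image has the same dimension, which is exactly the claimed formula. The only point needing care is that $\bar s_\sigma$ identifies $\mathbb Y_m$ with a projective bundle over $\dualPP^{2n-1}$, not over $\PP(\calQ^\vee)$: this is what replaces the $4n-3$ base dimension by $2n-1$. It is immediate from $s_\sigma$ being a section, so no step poses a real obstacle.
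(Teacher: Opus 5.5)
Your proposal is correct and follows the paper's proof: the rank of $\widetilde{\mathcal F}^m_d$ is unchanged by pullback, $\dim\PP(\widetilde{\mathcal F}^m_d)=\dim\mathbb Y_m+\rk(\widetilde{\mathcal F}^m_d)-1$, and generic finiteness transfers this dimension to the image $\mathbb B^m_d$. Your justification of $\dim\mathbb Y_m$ (a projective bundle with fiber dimension $\binom{m+2n-3}{m}-1$ over the section $s_\sigma(\dualPP^{2n-1})$, so base dimension $2n-1$) spells out what the paper only states.
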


\begin{proof}
The rank of $\widetilde{\mathcal F}^m_d$ is the same as the rank of $\mathcal F^m_d$.
Since
$$
\dim \mathbb Y_m=\binom{m+2n-3}{m}+2n-2,
$$
we have
$$
\dim\PP(\widetilde{\mathcal F}^m_d)
=
\dim\mathbb Y_m+\rk(\widetilde{\mathcal F}^m_d)-1.
$$
Generic finiteness gives the stated dimension.
\end{proof}

\begin{theorem}
\label{thm:degree-involutive}
Assume that $\widetilde\rho_m$ is generically finite onto its image, with generic degree
$\widetilde\delta_m(d)$. Then
$$
\widetilde\delta_m(d)\deg(\mathbb B^m_d)
=
\int_{\mathbb Y_m}s_{\dim\mathbb Y_m}(\widetilde{\mathcal F}^m_d)\cap[\mathbb Y_m].
$$
In particular, if $\widetilde\rho_m$ is birational, this integral is the degree of $\mathbb
B^m_d$.
\end{theorem}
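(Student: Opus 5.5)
The argument follows the proof of Theorem~\ref{thm:degree-noninvolutive}, with $\mathbb Y_m$ in place of $\mathbb X_m$ and $\widetilde{\mathcal F}^m_d$ in place of $\mathcal F^m_d$.

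First, I will identify the incidence map. The total space $\PP(\widetilde{\mathcal F}^m_d)$ is a projective bundle over $\mathbb Y_m$. By construction $\widetilde{\mathcal F}^m_d\subset \bar s_\sigma^*\Sym^d\calF=\Sym^dV^\vee\otimes\calO$, so the map $\widetilde\rho_m$ is the composition
$$
\PP(\widetilde{\mathcal F}^m_d)\hookrightarrow \mathbb Y_m\times\PP(\Sym^dV^\vee)\to\PP(\Sym^dV^\vee).
$$
It follows that
$$
\widetilde\rho_m^*\calO_{\PP(\Sym^dV^\vee)}(1)=\calO_{\widetilde{\mathcal F}^m_d}(1).
$$

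Next, I will compute the dimension of the total space. Set $r=\rk\widetilde{\mathcal F}^m_d$ and $N=\dim\mathbb B^m_d$. Under generic finiteness, Proposition~\ref{prop:dimension-involutive} gives
$$
N=\dim\mathbb Y_m+r-1=\dim\PP(\widetilde{\mathcal F}^m_d).
$$

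With $H$ the hyperplane class, the projection formula applied to the proper, generically finite map $\widetilde\rho_m$ gives
$$
\widetilde\rho_{m*}[\PP(\widetilde{\mathcal F}^m_d)]=\widetilde\delta_m(d)[\mathbb B^m_d].
$$
Hence
$$
\int_{\PP(\widetilde{\mathcal F}^m_d)}\widetilde\rho_m^*H^N=\widetilde\delta_m(d)\int_{\mathbb B^m_d}H^N=\widetilde\delta_m(d)\deg\mathbb B^m_d .
$$

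Finally, let $p:\PP(\widetilde{\mathcal F}^m_d)\to\mathbb Y_m$ be the bundle projection and $\zeta=c_1(\calO_{\widetilde{\mathcal F}^m_d}(1))$. With the paper's convention that $\PP(E)$ parametrizes lines, Fulton's definition of Segre classes reads
$$
s_i(E)\cap\alpha=p_*\bigl(\zeta^{r-1+i}\cap p^*\alpha\bigr).
$$
Taking $i=\dim\mathbb Y_m$ and $\alpha=[\mathbb Y_m]$, the exponent $r-1+i$ equals $N$, so
$$
\int p_*\bigl(\zeta^{N}\cap[\PP(\widetilde{\mathcal F}^m_d)]\bigr)=\int_{\mathbb Y_m}s_{\dim\mathbb Y_m}(\widetilde{\mathcal F}^m_d)\cap[\mathbb Y_m].
$$
This is the claimed identity. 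The birational case is $\widetilde\delta_m(d)=1$.

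The only point needing care is the sign and dual convention in the Segre class. Since $\PP(E)$ here parametrizes lines and the pullback of $\calO(1)$ is the dual of the tautological subbundle, Fulton's convention applies directly. This is the same convention as in Theorem~\ref{thm:degree-noninvolutive}, and it matches formula \eqref{eq:segre-class}. A further remark, not needed for the proof: $\bar s_\sigma^*$ commutes with Segre classes, so one may also write the integrand as $\bar s_\sigma^*s(\mathcal F^m_d)$.
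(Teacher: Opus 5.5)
Your proof is correct and is the paper's own argument: the paper says the proof of Theorem~\ref{thm:degree-noninvolutive} carries over verbatim to $\mathbb Y_m$ and $\widetilde{\mathcal F}^m_d$, namely the projection formula for $\widetilde\rho_m$ followed by the definition of Segre classes on the projective bundle. You also make explicit two points the paper leaves implicit: that $\widetilde\rho_m^*\calO(1)=\calO_{\widetilde{\mathcal F}^m_d}(1)$, and that, under generic finiteness, the exponent $\dim\mathbb B^m_d$ equals $\rk\widetilde{\mathcal F}^m_d-1+\dim\mathbb Y_m$.
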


\begin{proof}
The proof is identical to that of Theorem \ref{thm:degree-noninvolutive}, replacing $\mathbb
X_m$ and $\mathcal F^m_d$ by their pullbacks $\mathbb Y_m$ and $\widetilde{\mathcal
F}^m_d$.
\end{proof}

Whether $\widetilde\rho_m$ is generically finite can be read off from the integral itself.

\begin{corollary}
\label{cor:nonvanishing-finite}
Let $N=\dim\PP(\widetilde{\mathcal F}^m_d)$ and $H=c_1(\calO_{\PP(\Sym^dV^\vee)}(1))$.  If
$$
\int_{\mathbb Y_m}s_{\dim\mathbb Y_m}(\widetilde{\mathcal F}^m_d)\cap[\mathbb Y_m]
=\int_{\PP(\widetilde{\mathcal F}^m_d)}\widetilde\rho_m^{\,*}H^{N}\neq0,
$$
then $\widetilde\rho_m$ is generically finite onto $\mathbb B^m_d$, and the integral equals
$\widetilde\delta_m(d)\deg(\mathbb B^m_d)$.
\end{corollary}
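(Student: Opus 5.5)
The plan is to argue by contraposition: if $\widetilde\rho_m$ is not generically finite onto its image, then the integral $\int_{\PP(\widetilde{\mathcal F}^m_d)}\widetilde\rho_m^{\,*}H^N$ vanishes. The first step is the identification of the two integrals in the statement. This is exactly the computation in the proof of Theorem~\ref{thm:degree-noninvolutive}, transported to $\mathbb Y_m$. Indeed, $\PP(\widetilde{\mathcal F}^m_d)$ is a projective bundle over $\mathbb Y_m$ with $N=\dim\mathbb Y_m+\rk\widetilde{\mathcal F}^m_d-1$. Moreover, $\widetilde\rho_m^{\,*}\calO_{\PP(\Sym^dV^\vee)}(1)$ is the tautological $\calO_{\widetilde{\mathcal F}^m_d}(1)$, because $\widetilde{\mathcal F}^m_d\subset\Sym^dV^\vee\otimes\calO$ is a subbundle of a trivial bundle and $\widetilde\rho_m$ is induced by that inclusion. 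Pushing forward $c_1(\calO(1))^{N}$ along the bundle projection gives $s_{\dim\mathbb Y_m}(\widetilde{\mathcal F}^m_d)\cap[\mathbb Y_m]$ by definition of Segre classes, with the sign convention matching the paper's convention for $\PP(E)$ as lines.

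Next I use that $\PP(\widetilde{\mathcal F}^m_d)$ is irreducible of dimension $N$, being a projective bundle over the irreducible variety $\mathbb Y_m=\PP(\Sym^m\calG_\sigma)$ over $\dualPP^{2n-1}$. Since $\widetilde\rho_m$ is proper, the projection formula gives
$$
\widetilde\rho_{m*}\bigl(\widetilde\rho_m^{\,*}H^N\cap[\PP(\widetilde{\mathcal F}^m_d)]\bigr)=H^N\cap\widetilde\rho_{m*}[\PP(\widetilde{\mathcal F}^m_d)].
$$
By definition of proper pushforward of cycles, $\widetilde\rho_{m*}[\PP(\widetilde{\mathcal F}^m_d)]=\deg(\PP(\widetilde{\mathcal F}^m_d)/\mathbb B^m_d)\,[\mathbb B^m_d]$ when $\dim\mathbb B^m_d=N$, and it is $0$ when $\dim\mathbb B^m_d<N$. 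Hence, if the fibres of $\widetilde\rho_m$ are positive-dimensional generically, the class is zero and the integral vanishes. Contrapositively, a nonzero integral forces $\dim\mathbb B^m_d=N$. Then $\widetilde\rho_m$ is a dominant morphism between irreducible varieties of equal dimension, so it is generically finite of some degree $\widetilde\delta_m(d)$.

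Finally, once generic finiteness is established, the hypothesis of Theorem~\ref{thm:degree-involutive} is met, and that theorem gives the equality of the integral with $\widetilde\delta_m(d)\deg(\mathbb B^m_d)$. Equivalently, this follows directly from the pushforward formula above together with $\int H^N\cap[\mathbb B^m_d]=\deg\mathbb B^m_d$.

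The only step requiring care is the first one: checking that $\widetilde\rho_m^{\,*}H$ is really $c_1(\calO_{\widetilde{\mathcal F}^m_d}(1))$ and that the Segre-class sign conventions agree. Since this is identical to what was already used in Theorem~\ref{thm:degree-noninvolutive}, I would simply cite it. The vanishing of pushforward under a map that drops dimension (Fulton, \S1.4) is standard.
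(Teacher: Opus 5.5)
Your proposal is correct and follows the paper's own argument. The paper likewise observes that $\dim\mathbb B^m_d<N$ forces $\widetilde\rho_{m*}[\PP(\widetilde{\mathcal F}^m_d)]=0$, so the integral vanishes by the projection formula, and then cites Theorem~\ref{thm:degree-involutive} for the equality. Your additional check, that $\widetilde\rho_m^{\,*}H=c_1(\calO_{\widetilde{\mathcal F}^m_d}(1))$ with matching Segre-class conventions, is what the paper takes over from the proof of Theorem~\ref{thm:degree-noninvolutive}.
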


\begin{proof}
If $\dim\mathbb B^m_d<N$, then $\widetilde\rho_{m*}[\PP(\widetilde{\mathcal F}^m_d)]=0$ in
$A_N(\PP(\Sym^dV^\vee))$, and the integral vanishes by the projection formula.  The last
assertion is Theorem~\ref{thm:degree-involutive}.
\end{proof}

\begin{remark}[The incidence degree]
The incidence projection need not be birational, which is why $\widetilde\delta_m(d)$
appears in the formula. In $\PP^3$ it is birational for all $d\ge m$ except $(m,d)=(2,2)$,
where the image is the whole space of quadrics and the map has degree four
(Section~\ref{sec:P3-birationality}); the same holds in $\PP^5$
(Section~\ref{sec:higher-birationality}).
\end{remark}

\section{Generic degree in \texorpdfstring{$\PP^3$}{P3}}
\label{sec:P3-birationality}

Let $n=2$, so that $\mathbb Y_m$ is the parameter space of involutive plane cones of degree
$m$.  A point is a pair $(H,[q])$, where $H\subset\PP^3$ is a plane and $q$ is a binary
form of degree $m$ in the two linear coordinates of $H$ vanishing at $\sigma(H)$, that is,
the equation of a cone in $H$ whose vertex contains $\sigma(H)$.  Thus
$\dim\mathbb Y_m=m+3$.  For $C\in\mathbb Y_m$, put
$$
L_C=H^0(\PP^3,I_C(d))
$$
and
\begin{equation}
\label{eq:a-dm}
a(d,m)=\codim L_C
=\binom{d+2}{2}-\binom{d-m+2}{2}
=\frac{m(2d-m+3)}2.
\end{equation}

Let $S=\CC[x_0,x_1,x_2,x_3]$.  For two members $C,C'\in\mathbb Y_m$, write
$$
H_{C\cap C'}(d)=\dim_\CC\bigl(S/(I_C+I_{C'})\bigr)_d.
$$
This is the Hilbert function of the ideal $I_C+I_{C'}$ itself, not of its saturation.

\begin{lemma}
\label{lem:linear-codim-estimate}
For $C,C'\in\mathbb Y_m$, one has
$$
\codim_{L_C}(L_C\cap L_{C'})=a(d,m)-H_{C\cap C'}(d),
$$
where the intersection is scheme-theoretic.
\end{lemma}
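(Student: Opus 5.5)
This is pure linear algebra in the graded piece $S_d$. Write $I=I_C$ and $J=I_{C'}$ for the saturated homogeneous ideals. For $C\in\mathbb Y_m$ with $C=V(\ell,F)$, where $\ell$ is linear and $F$ has degree $m$ and is independent of $\ell$ modulo $\ell$, the ideal $(\ell,F)$ is a complete intersection. It is therefore saturated, so $L_C=I_d$. Its codimension in $S_d$ is $\dim(S/I)_d$, which can be read off from the Koszul resolution
$$0\to S(-m-1)\to S(-1)\oplus S(-m)\to S\to S/I\to0.$$
This gives $\dim(S/I)_d=\binom{d+3}{3}-\binom{d+2}{3}-\binom{d-m+3}{3}+\binom{d-m+2}{3}=\binom{d+2}{2}-\binom{d-m+2}{2}$. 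That is exactly $a(d,m)$ as in \eqref{eq:a-dm}, which confirms $\codim L_C=a(d,m)$.

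Next, apply the dimension formula for subspaces of $S_d$:
$$\dim(I_d\cap J_d)=\dim I_d+\dim J_d-\dim(I_d+J_d).$$
Since the sum of homogeneous ideals is graded, $(I+J)_d=I_d+J_d$, and hence $\dim(I_d+J_d)=\dim S_d-H_{C\cap C'}(d)$ by the definition of $H_{C\cap C'}$. Combining this with $\dim I_d=\dim J_d=\dim S_d-a(d,m)$ gives
$$\codim_{L_C}(L_C\cap L_{C'})=\dim I_d-\dim(I_d\cap J_d)=\dim(I_d+J_d)-\dim J_d=a(d,m)-H_{C\cap C'}(d).$$

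The only point needing care is the phrase ``scheme-theoretic''. We must use the ideal $I_C+I_{C'}$ itself, not its saturation, exactly as the paper stresses in defining $H_{C\cap C'}$. We must also make sure $L_C$ is identified with $(I_C)_d$, which holds because $H^0(\PP^3,I_C(d))=(I_C^{\mathrm{sat}})_d$ and the complete intersection ideal is already saturated. This identification is the single nonformal input, and it follows from the complete intersection structure. If $F$ is nonreduced the same Koszul argument applies, since only the regular-sequence property is used. I expect no real obstacle beyond noting these two points; everything else is Grassmann's dimension formula.
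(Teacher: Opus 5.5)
Your proof is correct and is essentially the paper's argument: the paper also writes $L_C+L_{C'}=(I_C+I_{C'})_d$, takes its codimension in $S_d$ to be $H_{C\cap C'}(d)$, and concludes with Grassmann's formula and $\codim L_{C'}=a(d,m)$. The only difference is that you explicitly justify $L_C=(I_C)_d$, via saturation of the complete intersection $(h,q)$, and recompute $a(d,m)$ from the Koszul resolution; the paper takes both for granted from \eqref{eq:a-dm}.
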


\begin{proof}
Put $V_d=S_d$.  Since
$$
L_C+L_{C'}=(I_C+I_{C'})_d,
$$
we have
$$
\codim_{V_d}(L_C+L_{C'})=\dim(S/(I_C+I_{C'}))_d=H_{C\cap C'}(d).
$$
The assertion follows from $\codim_{V_d}L_{C'}=a(d,m)$.
\end{proof}

We shall use the elementary function
$$
B(t)=\max\{t+1,0\}.
$$
Put $R=\CC[u,v]$. If $f,g\in R$ are nonzero binary forms of degree $m$ and their greatest
common divisor $D$ has degree $r$, write $f=Da$ and $g=Db$. Since $a$ and $b$ are
coprime, the elementary graded resolution
$$
0\longrightarrow R(-(2m-r))
\xrightarrow{\,(-b,a)\,}
R(-m)^2
\xrightarrow{\,(f,g)\,}
R
\longrightarrow R/(f,g)\longrightarrow0
$$
gives
\begin{equation}
\label{eq:binary-Hilbert}
H_{\CC[u,v]/(f,g)}(d)
=B(d)-2B(d-m)+B(d-2m+r).
\end{equation}
For $d\ge m$, this is
\begin{equation}
\label{eq:binary-Hilbert-max}
\max\{r,2m-d-1\}.
\end{equation}

\begin{theorem}
\label{thm:P3-birationality}
Let $m\ge2$ and $d\ge m$.  If $(m,d)\ne(2,2)$, the incidence projection
$$
\widetilde\rho_m:\PP(\widetilde{\mathcal F}^m_d)\longrightarrow\mathbb B^m_d
$$
is birational.
\end{theorem}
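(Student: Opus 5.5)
The plan is to show that a general hypersurface $F\in\mathbb B^m_d$ contains exactly one member of $\mathbb Y_m$, and that it lies in the fibre only once. Since $\PP(\widetilde{\mathcal F}^m_d)$ is a projective bundle over $\mathbb Y_m$ with fibre $\PP(L_C)$ over $C$, it suffices to prove a dimension estimate. Consider the locus
$$
Z=\{(C,C',F): C\ne C',\ F\in \PP(L_C\cap L_{C'})\}.
$$
I will show that $\dim Z<\dim\PP(\widetilde{\mathcal F}^m_d)=(m+3)+\dim L_C-1$. Then the general point of the incidence has a one-point fibre. Generic injectivity in characteristic zero gives birationality onto the image, because the map is generically finite of degree one. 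Equivalently, for each stratum of pairs $(C,C')$ I need
$$
\dim\{(C,C')\text{ in stratum}\}-(m+3) < \codim_{L_C}(L_C\cap L_{C'}) = a(d,m)-H_{C\cap C'}(d),
$$
using Lemma~\ref{lem:linear-codim-estimate}.

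I stratify pairs $C\neq C'$ by geometry. In the first case, $C,C'$ lie in distinct planes $H\ne H'$. Then $I_C+I_{C'}\supset(y,y')$, with $y,y'$ the two plane equations, so $S/(I_C+I_{C'})$ is a quotient of the coordinate ring of the line $H\cap H'$. Hence $H_{C\cap C'}(d)\le d+1$. The pairs form a family of dimension at most $2(m+3)$, so the excess over $m+3$ is $m+3$. I need $m+3<a(d,m)-(d+1)=\frac{m(2d-m+3)}2-d-1$. For $d\ge m\ge2$ this holds except at small values. At $m=2,d=2$ we get $5-3=2$ against $5$, which fails, as it must. At $m=2,d=3$ we get $7-4=3<5$, which also fails. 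So the crude bound is insufficient, and I must refine it. The generic pair of cones in distinct planes meets the line $H\cap H'$ in at most $m$ points on each side, and $I_C+I_{C'}$ restricted to the line is generated by the two binary forms $q|_{\ell}$ and $q'|_{\ell}$ of degree $m$. By \eqref{eq:binary-Hilbert-max} the Hilbert function is then $\max\{r,2m-d-1\}\le m$, where $r$ counts common points. Pairs with $r$ common points on the line cost about $r$ conditions. Balancing $r$ against the loss yields an inequality of the form $m+3-r<a(d,m)-\max\{r,2m-d-1\}$. I will check this for all $r\le m$ and $(m,d)\ne(2,2)$. The special position where $\sigma(H)$ or $\sigma(H')$ lies on the line imposes further conditions, and I will handle those subcases similarly.

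In the second case, $C,C'$ lie in the same plane $H$, so they share the vertex $\sigma(H)$. Both are then cones given by binary forms $f,g$ in the two coordinates on $H$ vanishing at $\sigma(H)$, and $S/(I_C+I_{C'})=\CC[z_0,u,v]/(f,g)$. Its degree-$d$ part is $\sum_{k\le d}H_{R/(f,g)}(k)$, which \eqref{eq:binary-Hilbert} computes via $r=\deg\gcd$. Pairs with gcd of degree $r$ form a family of dimension $3+m+(m-r)$, so the excess is $m-r$. The needed inequality is then $m-r<a(d,m)-\sum_{k=0}^d(B(k)-2B(k-m)+B(k-2m+r))$, which is explicit, and I will verify it by elementary summation for $r<m$.

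The main obstacle is the first case at small $(m,d)$, such as $m=2$ and $d=3,4$, where the estimates are tight. There I will try to exploit extra incidence: a hypersurface containing two planar cones in different planes must contain their intersection on the line $H\cap H'$. I will either compute the Hilbert function of $(y,q,y',q')$ exactly or argue directly that the general member of $\mathbb B^m_d$ has a unique planar cone section through a polar vertex. As a fallback, I will use Corollary~\ref{cor:nonvanishing-finite} together with an explicit example, for instance a smooth cubic surface containing exactly one line pair of the required type, to confirm that the fibre has one point.
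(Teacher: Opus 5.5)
\textbf{Gap: cones in the same plane.} For cones in distinct planes you follow the paper's argument. Your refined inequality $m+3-r<a(d,m)-\max\{r,2m-d-1\}$ is its stratum (a), and your special positions are its strata (b)--(d). Those strata need their own counts: when both cones contain $L=H\cap H'$ one has $H_{C\cap C'}(d)=d+1$, and the estimate there relies on that stratum having dimension only $m$.

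The genuine gap is the case $C,C'\subset H$. There the inequality you plan to verify ``by elementary summation'' is false. Write $q=Da$, $q'=Db$ with $\deg D=r<m$. Then \eqref{eq:binary-Hilbert} gives
$$\codim_{L_C}(L_C\cap L_{C'})=\sum_{j=1}^{d-m+1}\min\{j,\,m-r\}.$$
For $d=m$ this codimension is $1$ for every $C'\ne C$ in $H$ (indeed $L_C\cap L_{C'}=hS_{m-1}$), whereas your excess is $m-r\ge1$. The inequality also fails at $(m,d,r)=(3,4,0)$, where both sides equal $3$.

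No finer stratification helps. Every $L_C\cap L_{C'}$ with $C'\subset H$ contains $hS_{d-1}$, which does not depend on $C'$. So adding the dimension of the family of $C'$ to $\dim(L_C\cap L_{C'})$ overcounts.

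In fact your reduction itself is false whenever $m\ge\binom{d-m+2}{2}$, for instance for every $d=m$. The triples with $C\ne C'$ in the same plane $H$ and $F\in\PP(hS_{d-1})$ already form a family of dimension $2m+2+\binom{d+2}{3}$. By contrast, $\dim\PP(\widetilde{\mathcal F}^m_d)=m+2+\binom{d+2}{3}+\binom{d-m+2}{2}$. So bounding $\dim Z$ cannot work. You must show that $(C,C',F)\mapsto(C,F)$ from $Z$ to $\PP(\widetilde{\mathcal F}^m_d)$ is not dominant, which means fixing a general $C$ and a general $F\in L_C$.

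\textbf{How the paper closes this case.} It uses divisibility, not counting. Take $q$ square-free and $F=hA+qB$ with $B$ general, so that $F|_H=qB$. If $C'=(H,[q'])$ lies on $F$, then $q'\mid qB$. Every linear factor of the binary form $q'$ vanishes at $\sigma(H)$. A general $B$ does not vanish at $\sigma(H)$ (it is a nonzero constant if $d=m$), so no linear factor of $q'$ divides $B$. Since $q$ is square-free, this forces $q'\mid q$, hence $[q']=[q]$.

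\textbf{Your fallback.} It does not replace this step, for three reasons.
\begin{itemize}
\item Corollary~\ref{cor:nonvanishing-finite} gives only generic finiteness, not degree one.
\item An explicit cubic addresses only $(m,d)=(2,3)$, which the dimension counts already handle.
\item Deducing degree one from a special fibre requires that fibre to be a single \emph{reduced} point, not merely a single point.
\end{itemize}
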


\begin{proof}
Fix a general $C=(H,[q])\in\mathbb Y_m$, with $q$ square-free, and consider a general form $F\in
L_C$.  Write
$$
F=hA+qB,
$$
where $h$ defines $H$ and $B$ is a degree-$(d-m)$ form on $H$.  For general $B$,
no nonconstant binary factor through $\sigma(H)$ divides $B$.  Hence a second degree-$m$
cone supported on the same plane would have equation $q'\mid qB$, forcing $q'\mid q$ and
therefore $[q']=[q]$.

Let now $C'=(H',[q'])$ with $H'\ne H$, and set $L=H\cap H'$. We show, stratum by
stratum, that the locus of forms in $L_C$ containing such a $C'$ has proper closure in
$L_C$.

\smallskip
\noindent\emph{(a) Neither cone contains $L$, and $\sigma(H')\notin L$.}
The restriction map from the binary forms defining cones in $H'$ to $H^0(L,\calO_L(m))$ is
an isomorphism.  Let $r$ be the degree of the gcd of $q|_L$ and $q'|_L$.  For fixed
$H'$, the forms with this overlap have projective dimension at most $m-r$; hence the
stratum has dimension at most $m+3-r$.  By \eqref{eq:binary-Hilbert-max} and
Lemma~\ref{lem:linear-codim-estimate}, its additional codimension inside $L_C$ is at least
$$
a(d,m)-\max\{r,2m-d-1\}.
$$
If the maximum is $r$, properness follows from $a(d,m)>m+3$.  If it is $2m-d-1$, the
required difference is
$$
a(d,m)+d-3m-2+r.
$$
For $m\ge3$ this is already positive at $d=m$ and $r=0$, and it increases with $d$; for
$m=2$ it equals $3d-7+r$, which is positive for every $d\ge3$.  The only failure is
$(m,d)=(2,2)$.

\smallskip
\noindent\emph{(b) Neither cone contains $L$, and $\sigma(H')\in L$.}
The planes $H'$ satisfying this extra incidence form a family of dimension at most two,
while $[q']$ varies in $\PP^m$.  Thus the stratum has dimension at most $m+2$.  On
$L$, the restriction of $q'$ is a pure $m$-th power.
Formula~\eqref{eq:binary-Hilbert-max} therefore gives $H_{C\cap C'}(d)\le m$, so the
additional codimension is at least $a(d,m)-m$.  The inequality
$$
a(d,m)-m>m+2
$$
holds for every $d\ge m$, except $(m,d)=(2,2)$.

\smallskip
\noindent\emph{(c) Exactly one of the cones contains $L$.}
This stratum has dimension at most $m+1$: if $C$ contains $L$, the line is one of the
finitely many components of the general cone and $H'$ varies in a pencil; if $C'$ contains
$L$, the condition $\sigma(H')\in L$ and divisibility by the line give the same bound.
The intersection on $L$ is a divisor of degree $m$, so the additional codimension is at
least $a(d,m)-m$.  Since
$$
a(d,m)-m>m+1
$$
away from $(2,2)$, this stratum does not dominate.

\smallskip
\noindent\emph{(d) Both cones contain $L$.}
The line $L$ is then one of the finitely many components of $C$.  The plane $H'$ varies
in a pencil and the residual binary form has projective dimension $m-1$, so the stratum has
dimension at most $m$.  Moreover
$$
I_C+I_{C'}=(h,h'),
$$
hence $C\cap C'=L$ scheme-theoretically and $H_{C\cap C'}(d)=d+1$.  The inequality
$$
a(d,m)-(d+1)>m
$$
holds for $d\ge m$, except when $(m,d)=(2,2)$.

Thus a general $F\in L_C$ contains no second cone in the family.  The incidence space is
irreducible, being a projective bundle over the irreducible variety $\mathbb Y_m$.  The generic
fiber over $\mathbb B^m_d$ therefore consists of one point; in characteristic zero the
induced extension of function fields has degree one.  Hence $\widetilde\rho_m$ is
birational.
\end{proof}

For $m=d=2$ the incidence can be read in terms of Lie algebras: under the standard
correspondence $\Sym^2V^\vee\simeq\mathfrak{sp}(V)$ \cite{Buczynski2006} it becomes the
eigenline incidence of Hamiltonian endomorphisms, a special case of the eigenvector varieties
of \cite{DiRoccoSturmfelsSverrisdottir2026}.

\begin{proposition}[Quadrics in arbitrary odd projective dimension]
\label{prop:quadratic-general}
Let $n\ge2$.  For $m=d=2$, the image of the involutive cone incidence is the full
projective space of quadrics in $\PP^{2n-1}$, and the generic degree of the incidence map is
$2n$.
\end{proposition}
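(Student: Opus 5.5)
Our plan is to translate the incidence into linear algebra of $\mathfrak{sp}(V)$ and count eigenlines. For $m=d=2$, a point of $\PP(\widetilde{\mathcal F}^2_2)$ is a pair $(C,[F])$, where $C=(H,[q])\in\mathbb Y_2$ and $F=hA+q$ (up to scalar) with $h$ a linear form defining $H$, $A$ a linear form, and $q$ a quadratic form on $H$ independent of the coordinate through $\sigma(H)$. Dimension count: $\dim\mathbb Y_2=\binom{2n-1}{2}+2n-2$, and $\rk\widetilde{\mathcal F}^2_2=2n+1$, so $\dim\PP(\widetilde{\mathcal F}^2_2)=\binom{2n-1}{2}+4n-2=\binom{2n+1}{2}-1=\dim\PP(\Sym^2V^\vee)$. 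Hence it suffices to show that the general fiber of $\widetilde\rho_2$ is finite of cardinality $2n$; surjectivity then follows because the source is irreducible of the same dimension and the map is proper.

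First we characterize intrinsically when a quadric $F$ contains the involutive cone for $H=V(h)$. Write $\sigma(H)=[v]$, where $h=\omega(v,\cdot)$. By Theorem~\ref{thm:cone-characterization}, after the symplectic normalization $H=V(y_n)$ and $v=e_{x_n}$, the condition $F\in(y_n)+(\text{forms independent of }x_n,y_n)$ says exactly that $F|_{y_n=0}$ is independent of $x_n$, i.e. $\partial F/\partial x_n\in(y_n)$. Invariantly, this reads: the polar linear form $F(v,\cdot)$ is a multiple of $h=\omega(v,\cdot)$. Under the identification $\Sym^2V^\vee\simeq\mathfrak{sp}(V)$, $F(u,w)=\omega(u,Tw)$ with $T$ Hamiltonian (equivalently $F(v,\cdot)=\omega(Tv,\cdot)$ up to sign), the condition becomes $Tv\in\CC v$. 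So the fiber over $[F]$ is in bijection with the eigenlines $[v]$ of $T$: each eigenline determines $H=V(\omega(v,\cdot))$, and then $q$ and $A$ are determined up to the decomposition $F=hA+q$. We have to check that this decomposition gives a single point of the fiber, because $A$ is only determined modulo $h$ and $q$ only modulo $h$; in $\PP(\widetilde{\mathcal F})$ the point is the class of $F$ together with $C$, and $C$ is determined by $F|_H$. We must also check that $q\neq0$ generically, i.e. that $F|_H$ is not zero, which holds for general $F$.

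Finally, a general $T\in\mathfrak{sp}(V)$ is regular semisimple, with $2n$ distinct eigenvalues $\pm\lambda_1,\dots,\pm\lambda_n$, hence exactly $2n$ eigenlines, and the fiber has $2n$ points. Every quadric arises, which gives surjectivity, and the generic degree is $2n$ since we are in characteristic zero. The main obstacle is the invariant reformulation: verifying carefully that membership of $F$ in the fiber of $\widetilde{\mathcal F}^2_2$ over $(H,\sigma(H))$ is exactly the eigenvector condition. We would do this in the normalized coordinates, where $T e_{x_n}\in\CC e_{x_n}$ means the $x_n$-column of the Hessian pairing has only a $y_n$-dual component, i.e. $\partial F/\partial x_n$ is a multiple of $y_n$, and then transport by $\Sp(V)$-equivariance of $\sigma$.
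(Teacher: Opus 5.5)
Your proposal is correct and follows essentially the paper's route: you identify $\Sym^2V^\vee$ with $\mathfrak{sp}(V)$, show that $F$ lies over $(H,\sigma(H))$ exactly when $\sigma(H)$ is an eigenline of the associated Hamiltonian endomorphism, and count the $2n$ eigenlines of a regular semisimple element. The differences are minor. You add a dimension count and use generic smoothness in characteristic zero to pass from $2n$ set-theoretic fibre points to generic degree $2n$. The paper instead checks directly that the eigenline scheme of a regular semisimple element is reduced, and gets surjectivity from properness together with nonempty fibres over $U_{\mathrm{rs}}$.
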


\begin{proof}
The symplectic form induces a linear isomorphism
$$
\Sym^2V^\vee\longrightarrow\mathfrak{sp}(V),\qquad
Q\longmapsto A_Q,\qquad B_Q(u,v)=\omega(A_Qu,v),
$$
where $B_Q$ is the symmetric bilinear form polarizing $Q$.
Let $U_{\mathrm{rs}}\subset\PP(\Sym^2V^\vee)$ be the dense open subset corresponding to
regular semisimple elements of $\mathfrak{sp}(V)$.  If $Q\in U_{\mathrm{rs}}$, then the
associated Hamiltonian endomorphism $A_Q$ has $2n$ distinct nonzero eigenvalues and hence
$2n$ eigenlines.  If $[p]$ is an eigenline and $H=p^\perp$, then
$$
B_Q(p,v)=\omega(A_Qp,v)=0\qquad(v\in H).
$$
Since $p\in p^\perp=H$, the same vanishing gives $B_Q(p,p)=0$. Thus $p\in Q\cap H$ and the
differential of $Q|_H$ vanishes at $p=\sigma(H)$. Moreover, the radical of
$B_Q|_{p^\perp}$ is
$$
\{v\in p^\perp : A_Qv\in(p^\perp)^\perp=\CC p\}=p^\perp\cap A_Q^{-1}(\CC p)=\CC p,
$$
because $A_Q$ is invertible and $A_Qp\in\CC p$. Hence $Q|_H$ has rank $2n-2\ge2$: the
hyperplane section is a reduced quadric cone whose vertex is exactly $p$, and it is
involutive by Theorem~\ref{thm:cone-characterization}.

Conversely, if $Q\cap H$ is such a cone with vertex $p=\sigma(H)$, then $B_Q(p,v)=0$ for
every $v\in p^\perp$. Consequently
$$
A_Qp\in(p^\perp)^\perp=\CC p,
$$
so $[p]$ is an eigenline of $A_Q$. More precisely, the linear map
$$
V/\CC p\longrightarrow(p^\perp)^\vee,\qquad
\overline w\longmapsto\omega(w,-)|_{p^\perp},
$$
is an isomorphism. Hence the scheme-theoretic singularity condition
is exactly
$$
B_Q(p,-)|_{p^\perp}=0
\quad\Longleftrightarrow\quad
A_Qp\bmod\CC p=0
\quad\Longleftrightarrow\quad
A_Qp\wedge p=0.
$$

The eigenline scheme of a regular semisimple $A_Q$ is reduced. Indeed, in an eigenbasis with
distinct eigenvalues
$\lambda_1,\ldots,\lambda_{2n}$, on the affine chart $z_i=1$ the equations
$A_Qz\wedge z=0$ contain
$$
(\lambda_j-\lambda_i)z_j=0\qquad(j\ne i),
$$
and hence cut out the $i$-th eigenline as a reduced point. Once $[p]$ is fixed, both
$H=p^\perp$ and $[Q|_H]$ are determined; moreover, $Q|_H\ne0$, since otherwise the
nondegenerate quadric $Q$ would contain $H$. Thus the fiber of the incidence map over every
$Q\in U_{\mathrm{rs}}$ is the eigenline scheme and consists of these $2n$ reduced points.

Finally, the incidence source is projective, so its image under the incidence projection is
closed. Since that image contains the dense open subset $U_{\mathrm{rs}}$, it is the full
projective space of quadrics. The generic degree of the incidence map is therefore $2n$.
\end{proof}

\begin{corollary}
In $\PP^3$, $\deg\mathbb B^2_2=1$ and the generic incidence degree is $4$.  In
$\PP^5$, the corresponding generic incidence degree is $6$.
\end{corollary}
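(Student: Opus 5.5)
The statement is an immediate specialization of Proposition~\ref{prop:quadratic-general} together with Theorem~\ref{thm:degree-involutive}. I would first take $n=2$ in Proposition~\ref{prop:quadratic-general}. This shows that for $m=d=2$ the image $\mathbb B^2_2$ of $\widetilde\rho_2$ is the whole space $\PP(\Sym^2V^\vee)\simeq\PP^9$ of quadrics in $\PP^3$. A linear projective space has degree one, so $\deg\mathbb B^2_2=1$. The same proposition gives generic degree $2n=4$: a regular semisimple element of $\mathfrak{sp}_4$ has four distinct eigenlines, and each eigenline yields exactly one involutive plane cone on the quadric, counted as a reduced point of the fiber.

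For $\PP^5$ I would take $n=3$ in the same proposition. It again gives that the image is all of $\PP(\Sym^2V^\vee)$ and that the generic degree is $2n=6$.

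As a consistency check, I would verify generic finiteness by a dimension count, using Proposition~\ref{prop:dimension-involutive} and $\dim\mathbb Y_2=\binom{2n-1}{2}+2n-2$.

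For $n=2$: $\rk\widetilde{\mathcal F}^2_2=\binom{4}{3}+\binom{2}{2}=5$ and $\dim\mathbb Y_2=3+2=5$. Hence $\dim\PP(\widetilde{\mathcal F}^2_2)=9=\dim\PP^9$, which matches surjectivity with finite fibers.

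Optionally, one could confirm the first claim through Theorem~\ref{thm:degree-involutive}. In that form it reads $\int_{\mathbb Y_2}s_5(\widetilde{\mathcal F}^2_2)=\widetilde\delta_2(2)\deg\mathbb B^2_2=4$, which the computer algebra of Section~11 checks.

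I see no real obstacle. The only point needing care is that "image equals the full space" already forces degree one, since the degree of a linear space is $1$. Everything else, including the reducedness of the eigenline scheme, was established in Proposition~\ref{prop:quadratic-general}.
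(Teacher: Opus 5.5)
Your proposal is correct and follows the paper's own route: the corollary is simply Proposition~\ref{prop:quadratic-general} specialized to $n=2$ and $n=3$. There, $\deg\mathbb B^2_2=1$ because the image is the whole linear space of quadrics. Your dimension count and the check that the $m=2$ formula gives $4$ at $d=2$ are consistent but optional extras.
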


\section{Birationality in higher dimension}
\label{sec:higher-birationality}

In $\PP^{2n-1}$ with $n\ge3$ two distinct hyperplanes meet along a $\PP^{2n-3}$, so two
cones with different supports no longer meet in finitely many points, and the strata of the
proof of Theorem~\ref{thm:P3-birationality} have to be replaced by an estimate of the Hilbert
function of the intersection. This gives a sufficient criterion
(Theorem~\ref{thm:higher-birationality-criterion}), which misses a few cases of low degree.
For $d=m$, and for $(m,d)=(2,3)$ in $\PP^5$, we then argue directly.

For $n\ge2$, put
$$
A_n(d,m)=
\binom{d+2n-2}{2n-2}
-
\binom{d-m+2n-2}{2n-2},
$$
so that $A_n(d,m)$ is the codimension, inside $H^0(\PP^{2n-1},\calO(d))$, of the linear
space of forms containing a fixed involutive cone of degree $m$. Also set
$$
D_n(m)=\dim \mathbb Y_m=\binom{m+2n-3}{m}+2n-2.
$$
We use the convention that $\binom{a}{b}=0$ when $a<b$. Define
\begin{equation}
\label{eq:Hn-definition}
H_n(d,m)=\binom{d+2n-3}{2n-3}-\binom{d-m+2n-3}{2n-3},
\end{equation}
the degree-$d$ value of the Hilbert function of a hypersurface of degree $m$ in
$\PP^{2n-3}$. By Pascal's identity,
\begin{equation}
\label{eq:An-minus-Hn}
A_n(d,m)-H_n(d,m)=\binom{d+2n-3}{2n-2}-\binom{d-m+2n-3}{2n-2}.
\end{equation}

\begin{theorem}[A higher-dimensional birationality criterion]
\label{thm:higher-birationality-criterion}
Let $n\ge2$, $m\ge2$, and $d\ge m$. If
\begin{equation}
\label{eq:higher-birationality-criterion}
A_n(d,m)-H_n(d,m)>D_n(m),
\end{equation}
then the involutive cone incidence
$$
\widetilde\rho_m:\PP(\widetilde{\mathcal F}^m_d)\longrightarrow \mathbb B^m_d
$$
in $\PP^{2n-1}$ is birational.
\end{theorem}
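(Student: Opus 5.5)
Following the proof of Theorem~\ref{thm:P3-birationality}, we will show that a general $F$ in $L_C=H^0(\PP^{2n-1},I_C(d))$, for general $C=(H,[q])\in\mathbb Y_m$ with $q$ square-free, contains no second member $C'\in\mathbb Y_m$. Since $\PP(\widetilde{\mathcal F}^m_d)$ is a projective bundle over the irreducible $\mathbb Y_m$, it is irreducible. A generic fiber consisting of one point then gives birationality in characteristic zero. The condition ``$F$ contains $C'$'' cuts out the subspace $L_C\cap L_{C'}$. So it suffices to show that
$$
\codim_{L_C}(L_C\cap L_{C'})>D_n(m)\quad\text{for every }C'\ne C,
$$
because then the union over the at most $D_n(m)$-dimensional family of $C'$ is a proper closed subset of $L_C$. (To get a closed subset we pass to closures of the incidence $\{(C',F):F\supset C'\}$ and drop the diagonal component.)

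The key step is an analogue of Lemma~\ref{lem:linear-codim-estimate}. The same argument, with $S=\CC[V]$, gives
$$
\codim_{L_C}(L_C\cap L_{C'})=A_n(d,m)-\dim_\CC\bigl(S/(I_C+I_{C'})\bigr)_d.
$$
So we need the uniform bound $\dim(S/(I_C+I_{C'}))_d\le H_n(d,m)$ for $C'\ne C$. After that, \eqref{eq:higher-birationality-criterion} finishes the proof.

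We will bound $\dim(S/(I_C+I_{C'}))_d$ by cases.
\begin{itemize}
\item \emph{Same plane, $H'=H$.} Then $I_C+I_{C'}=(h,q,q')$ with $q'\ne q$ (up to scalar) of the same degree. The quotient is a quotient of $S/(h,q)$ by a nonzero element $q'$, which lies outside $(h,q)$, so the dimension is at most $\dim(S/(h,q))_d-1$. Alternatively we argue as in the $\PP^3$ proof: for general $B$, the form $F=hA+qB$ contains only $C$ on $H$. We expect to fall back on this, but an honest Hilbert-function bound is cleaner.
\item \emph{Different planes, $H'\ne H$.} Then $I_C+I_{C'}\supset(h,h',q)$. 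Modulo $(h,h')$ we land in the coordinate ring of $L=H\cap H'\simeq\PP^{2n-3}$, and $q|_L$ is a form of degree $m$.
 \begin{itemize}
 \item If $q|_L\ne0$, then $\dim(S/(h,h',q))_d$ is exactly the Hilbert function of a degree-$m$ hypersurface in $\PP^{2n-3}$, namely $H_n(d,m)$. Adding $q'$ only decreases it, so the bound holds.
 \item If $q|_L=0$, then $h'\mid q$ modulo $h$, i.e.\ $L\subset C$.
 \end{itemize}
\end{itemize}

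The main obstacle is this last situation, where $C$ contains the whole $\PP^{2n-3}$. For general square-free $q$ with $n\ge3$, this cannot happen unless $q$ has a linear factor. Since $q$ is a general form in $2n-2\ge4$ variables of degree $m\ge2$, it is irreducible, so $q$ is not divisible by a linear form. Hence $q|_L\ne0$ for every $H'\ne H$.

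For $n=2$, however, $q$ is a binary form and always splits into lines. There we must use $q'$ as well, exactly as in strata (c) and (d) of Theorem~\ref{thm:P3-birationality}.
\begin{itemize}
\item If $q'|_L\ne0$, the same argument with the roles of $C$ and $C'$ swapped gives the bound $H_n(d,m)$.
\item If both restrictions vanish, then $I_C+I_{C'}\supseteq(h,h')$, with quotient dimension $d+1$. Here the count of such $C'$ is smaller: it is the stratum (d) estimate, dimension at most $m$. We will need to check that $A_2-(d+1)>m$ follows from the criterion, or else treat this stratum separately by the dimension count already done in $\PP^3$.
\end{itemize}
We expect this bookkeeping for $n=2$, and making the ``general $q$ is irreducible'' step precise uniformly in $C'$, to be the delicate points.
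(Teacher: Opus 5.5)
Your strategy is the paper's. You fix a general $C=(H,[q])$, rule out a second cone on the same hyperplane by divisibility of $F|_H=qB$, and for $H'\ne H$ bound the degree-$d$ Hilbert function of $I_C+I_{C'}$ by $H_n(d,m)$, using that $q$ is irreducible for $n\ge3$ so that $q|_K\ne0$; then you compare $A_n-H_n$ with $D_n(m)$. You obtain the bound by discarding $q'$, via $(h,h',q)\subseteq I_C+I_{C'}$. The paper instead computes the exact Hilbert function of $Z$ in terms of the gcd degree $s$ and maximizes at $s=m$. Both give the same estimate, and yours is shorter. Irreducibility raises no uniformity issue: $C$ is fixed first, and then $q|_K\ne0$ holds for every $H'\ne H$ at once.

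There are two corrections.

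\emph{Same hyperplane.} Your uniform bound $\dim(S/(I_C+I_{C'}))_d\le H_n(d,m)$ for all $C'\ne C$ is false when $H'=H$, and so is the reduction ``codimension $>D_n(m)$ for every $C'\ne C$.'' The ``honest Hilbert-function bound'' there gives only codimension $\ge1$, and this is sharp. At $d=m$, which the criterion allows (e.g.\ $n=3$, $m\ge5$), one has $L_C\cap L_{C'}=hS_{m-1}$, of codimension one in $L_C$. Meanwhile the same-plane cones form a family of dimension $\binom{m+2n-3}{m}-1$. The union is still proper only because all these intersections coincide. The divisibility argument captures exactly this: every factor of a cone form is a cone form, since $z_0$-degree is additive, and a general $B$ has no cone-form factor. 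So your fallback is mandatory, not an alternative.

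\emph{The case $n=2$.} You need not redo the strata. The criterion fails at $(m,d)=(2,2)$, because $A_2(2,2)-H_2(2,2)=3\not>5=D_2(2)$. So Theorem~\ref{thm:P3-birationality} applies directly, which is what the paper does. Your own $n=2$ bookkeeping does close: $A_2(d,m)-(d+1)-m=\tfrac12\bigl(2d(m-1)-m^2+m-2\bigr)>0$ for all $d\ge m\ge2$ except $(2,2)$.
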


\begin{proof}
If $n=2$, then
$$
A_2(2,2)=5,\qquad H_2(2,2)=2,\qquad D_2(2)=5,
$$
so condition~\eqref{eq:higher-birationality-criterion} fails for $(m,d)=(2,2)$. Hence,
under the hypothesis of the theorem, the conclusion for $n=2$ follows from
Theorem~\ref{thm:P3-birationality}. We may therefore assume that $n\ge3$.

Fix a general involutive cone $C=(H,[q])\in \mathbb Y_m$. Write
$$
L_C=H^0(\PP^{2n-1},I_C(d)).
$$
As in the proof of Theorem~\ref{thm:P3-birationality}, a general element of $L_C$ can be
written as
$$
F=hA+qB,
$$
where $h$ is an equation of $H$ and $B$ is a general form of degree $d-m$ on $H$. If
a second involutive cone has the same supporting hyperplane $H$, then its equation is
another degree $m$ form in the subring generated by the linear forms vanishing at the vertex
$\sigma(H)$. Every factor of such a form belongs to the same subring, and for a general $B$
no nonconstant form of the subring divides $B$. Hence any such second equation must
divide $q$, and therefore agrees with $q$ up to scalar. Thus a general $F\in L_C$
contains no second cone with support $H$.

It remains to exclude cones $C'=(H',[q'])$ with $H'\ne H$. Put
$$
K=H\cap H'\simeq \PP^{2n-3}.
$$
Since $n\ge3$, the defining form $q$ of a general $C$ is a general degree-$m$ form in
$2n-2\ge4$ variables and hence has no linear factor.
Consequently $q|_K\ne0$ for every $H'\ne H$. The scheme $Z=C\cap C'$ is cut out on
$K$ by the two degree $m$ forms obtained by restricting $q$ and $q'$ to $K$. Let $s$ be
the degree of their greatest common divisor, using the convention $\gcd(f,0)=f$, so that
$0\le s\le m$. If $s<m$, then after removing this common divisor the two residual forms
have no common factor, hence form a regular sequence in the homogeneous coordinate ring of
$K$. If $s=m$, the ideal generated by the two restrictions is principal. In both cases, the
degree $d$ Hilbert function of $Z$ equals
\begin{equation}
\label{eq:Hilbert-Z}
\binom{d+2n-3}{2n-3}
-2\binom{d-m+2n-3}{2n-3}
+\binom{d-2m+s+2n-3}{2n-3},
\end{equation}
which is increasing in $s$ and is therefore bounded above by its value at $s=m$, namely
$H_n(d,m)$.

For a fixed $C'$, the same linear-algebra computation as in
Lemma~\ref{lem:linear-codim-estimate} gives
$$
\codim_{L_C}(L_C\cap L_{C'})
\ge A_n(d,m)-H_n(d,m).
$$
The family of all possible $C'$'s has dimension $D_n(m)$. Thus, if
\eqref{eq:higher-birationality-criterion} holds, the union of the linear spaces $L_C\cap
L_{C'}$, with $C'\ne C$ and $H'\ne H$, is a proper subset of $L_C$. Consequently a
general form in $L_C$ contains no involutive cone of degree $m$ other than $C$. Since
$\mathbb Y_m$ is irreducible and the incidence space is a projective bundle over it, the
generic fiber of $\widetilde\rho_m$ consists of one point. Hence $\widetilde\rho_m$ is
birational.
\end{proof}

\begin{corollary}[The case \texorpdfstring{$\PP^5$}{P5}]
\label{cor:P5-birationality}
In $\PP^5$, the involutive cone incidence is birational whenever
\begin{equation}
\label{eq:P5-birationality-inequality}
\binom{d+3}{4}-\binom{d-m+3}{4}
>
\binom{m+3}{3}+4.
\end{equation}
In particular, the criterion proves birationality for $m=2$ and $m=3$ whenever $d\ge4$,
for $m=4$ whenever $d\ge5$, and for every $m\ge5$ throughout the natural range $d\ge
m$.
\end{corollary}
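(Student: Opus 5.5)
This is the case $n=3$ of Theorem~\ref{thm:higher-birationality-criterion}, so the plan is to specialize that criterion and check it case by case. First, with $n=3$, identity~\eqref{eq:An-minus-Hn} gives
$$
A_3(d,m)-H_3(d,m)=\binom{d+3}{4}-\binom{d-m+3}{4},
$$
and
$$
D_3(m)=\binom{m+3}{m}+4=\binom{m+3}{3}+4.
$$
Hence \eqref{eq:higher-birationality-criterion} is exactly \eqref{eq:P5-birationality-inequality}, and the general statement follows at once from Theorem~\ref{thm:higher-birationality-criterion}.

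For the listed ranges, write
$$
\Phi_m(d)=\binom{d+3}{4}-\binom{d-m+3}{4}
=\sum_{j=d-m+1}^{d}\binom{j+2}{3}.
$$
This telescoping sum comes from Pascal's rule. Each summand increases with $d$, so $\Phi_m(d)$ is strictly increasing in $d$. It therefore suffices to check the inequality at the smallest claimed value of $d$ for each $m$:
\begin{itemize}
\item for $m=2$ at $d=4$: $\Phi=35-5=30>14$;
\item for $m=3$ at $d=4$: $\Phi=35-1=34>24$;
\item for $m=4$ at $d=5$: $\Phi=70-1=69>39$.
\end{itemize}

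For $m\ge5$ and $d=m$, we have $\Phi_m(m)=\binom{m+3}{4}$. The inequality becomes
$$
\binom{m+3}{4}>\binom{m+3}{3}+4,
$$
and since $\binom{m+3}{4}=\tfrac{m}{4}\binom{m+3}{3}$, this is
$$
\Bigl(\tfrac m4-1\Bigr)\binom{m+3}{3}>4.
$$
At $m=5$ the left side is $\tfrac14\cdot56=14>4$, and the left side increases with $m$. Monotonicity in $d$ then covers all $d\ge m$.

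The only delicate point is making sure the monotonicity claims are justified. The sum representation handles monotonicity in $d$. For monotonicity in $m$ at $d=m$, both factors $\tfrac m4-1$ and $\binom{m+3}{3}$ are positive and increasing for $m\ge5$. Beyond these two checks the argument is arithmetic.
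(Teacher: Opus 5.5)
Your proof is correct and follows the paper's argument: you specialize Theorem~\ref{thm:higher-birationality-criterion} to $n=3$ via \eqref{eq:An-minus-Hn}, check the inequality at the smallest listed $d$, and use that the left-hand side increases in $d$. Your telescoping-sum proof of monotonicity, and the identity $\binom{m+3}{4}=\frac m4\binom{m+3}{3}$ that handles all $m\ge5$ at once, make explicit what the paper only calls ``direct evaluation.''
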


\begin{proof}
For $n=3$, identity~\eqref{eq:An-minus-Hn} reads
$A_3(d,m)-H_3(d,m)=\binom{d+3}{4}-\binom{d-m+3}{4}$, and $D_3(m)=\binom{m+3}{3}+4$, so
the criterion of Theorem~\ref{thm:higher-birationality-criterion} is exactly
\eqref{eq:P5-birationality-inequality}.
The displayed ranges follow by direct evaluation at the first listed value of $d$, together
with the fact that the left-hand side of \eqref{eq:P5-birationality-inequality} is increasing
in $d$.
\end{proof}

In $\PP^5$ the criterion misses $(m,d)=(2,2)$, $(2,3)$, $(3,3)$ and $(4,4)$. The first is
a true exception, by Proposition~\ref{prop:quadratic-general}. For the other three the
estimate in the proof of Theorem~\ref{thm:higher-birationality-criterion} is too crude:
$L_C\cap L_{C'}$ always contains $hh'S_{d-2}$, the space of hypersurfaces containing
$H\cup H'$, which does not depend on $q'$, so the union of the $L_C\cap L_{C'}$ is much
smaller than $\dim\mathbb Y_m+\dim(L_C\cap L_{C'})$. The next two propositions take this into
account.

\begin{proposition}[The boundary case \texorpdfstring{$d=m$}{d=m}]
\label{prop:boundary-birational}
Let $n\ge3$ and $m\ge3$.  Then the involutive cone incidence
$\widetilde\rho_m:\PP(\widetilde{\mathcal F}^m_m)\to\mathbb B^m_m$ in $\PP^{2n-1}$ is
birational.
\end{proposition}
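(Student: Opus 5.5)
For $d=m$ we argue directly, without the Hilbert-function estimate. Here $L_C=H^0(I_C(m))$ consists of the forms $hA+\lambda q$, with $A$ of degree $m-1$ on $\PP^{2n-1}$ and $\lambda\in\CC$. A general member is $F=hA+q$ with $A$ general. The plan is to take a general point $F$ of the image and show that it contains exactly one involutive cone $C'=(H',[q'])$.

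Suppose $F\in L_{C'}$. Then $F|_{H'}$ is a nonzero multiple of $q'$, since $F$ does not vanish on a hyperplane. So $F|_{H'}$ is a cone whose vertex contains $\sigma(H')$, and it is singular at $\sigma(H')$ with multiplicity $m$ along the line directions. More precisely, $F|_{H'}$ is invariant under translation in the $\sigma(H')$ direction. Writing $p=\sigma(H')$ and $H'=p^\perp$, the condition is
\[
\partial_pF\in(h'),\qquad h'=\omega(p,-),
\]
where $\partial_p$ is the derivative in the direction $p$. This says that $\partial_pF$ is divisible by the linear form $\omega(p,\cdot)$. So the problem becomes: for general $F$ in the image, count the points $[p]\in\PP^{2n-1}$ with $\omega(p,\cdot)\mid\partial_pF$.

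We first count conditions. Given $p$, the condition $\partial_pF\equiv0$ on $p^\perp$ asks a degree-$(m-1)$ form on $\PP^{2n-2}$ to vanish, which is $\binom{m+2n-3}{2n-2}$ conditions. The parameter $p$ varies in only $2n-1$ dimensions. For $m\ge3$ and $n\ge3$ the number of conditions exceeds $2n-1$ by a wide margin, so the expected solution set for a general form is empty. Our $F$ is not general, however: it is general in $\mathbb B^m_m$ and has the known solution $p_0=\sigma(H)$.

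To show $p_0$ is the only solution, we argue by incidence dimension. Let
\[
\Sigma=\{(p,p',F): F\in L_{C}\cap L_{C'}\text{ for some cones with polar points }p\ne p'\}.
\]
We compare the dimension of its image with $\dim\mathbb B^m_m$. For fixed $p\ne p'$, consider the space of forms $F$ with $\partial_pF\in(h)$ and $\partial_{p'}F\in(h')$. We will compute this space in adapted symplectic coordinates. There are two cases: $\omega(p,p')\ne0$, and $\omega(p,p')=0$, where $p'\in H$ and $p\in H'$. In the first case one can take $p=x_n$, $h=y_n$, $p'=y_n$-direction and $h'=x_n$ up to sign. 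Then $F=y_nA+q(\text{rest})$ and $F=x_nA'+q'(\text{rest}')$, so $F\equiv q\equiv q'$ modulo $(x_n,y_n)$. This pins down the space: its codimension in $L_C$ equals the number of coefficients of $F$ lying outside the ideal $(x_n,y_n)$ that are free in $L_C$ but constrained in $L_{C'}$. That number is roughly $\binom{m+2n-2}{2n-2}-\binom{m+2n-3}{2n-3}$, well above the $2n-1$ parameters of $p'$. The second, degenerate case will be handled similarly, using a Lagrangian-adapted basis with $p,p'$ among the $x_i$.

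The main obstacle is getting the codimension estimates right uniformly in $m\ge3$. The critical case is $m=3$, $n=3$, since the gap is smallest there and the subspace $hh'S_{m-2}$ must be accounted for exactly, not crudely. I would first write out $m=3$, $n=3$ in coordinates to check that the codimension exceeds $\dim\{p'\}=2n-1$. I would then argue monotonicity in $m$ and $n$ by Pascal's identity.

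Once a general $F\in L_C$ is shown to have no second polar point $p'$, it remains to rule out a second cone with the same $p$, hence the same $H$. This follows exactly as in the first paragraph of the proof of Theorem~\ref{thm:higher-birationality-criterion}, with $B$ constant: $F|_H=q$ determines $[q]$. The generic fiber is therefore a single point, and irreducibility of the projective bundle gives birationality.
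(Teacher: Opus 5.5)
Your strategy is the paper's: fix a general $C=(H,[q])$, note that for each $p'=\sigma(H')$ the condition on $F\in L_C$ is linear, split according to whether $\omega(p,p')$ vanishes (equivalently whether $\sigma(H')\in H$), and compare the codimension of the resulting subspace $\Lambda_{p'}$ with the dimension of the family of $p'$. Your reformulation $\partial_{p'}F\in(h')$ is exactly the paper's condition $F|_{H'}\in\Sym^m\calG_{\sigma,H'}$. It also absorbs the case $F\supset H'$, so you do not need your unproved claim that $F$ contains no hyperplane. Your nondegenerate case is correct. In your coordinates the condition forces $A\in(x_n)$, so $\Lambda_{p'}=hh'S_{m-2}\oplus\CC q$, of codimension exactly $\binom{m+2n-3}{2n-2}$. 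Your binomial difference equals this by Pascal's identity. Note, though, that the constrained coefficients are those of monomials divisible by $y_n$ but not by $x_n$, not those ``outside $(x_n,y_n)$''.

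\textbf{Gap: the degenerate case.} The case $\omega(p,p')=0$, which you defer as ``handled similarly'', is the only tight one, and your stated plan of checking that the codimension exceeds $\dim\{p'\}=2n-1$ fails there. Take $p=e_{x_n}$, $p'=e_{x_{n-1}}$, $h=y_n$, $h'=y_{n-1}$, and $F=y_nA+cq$ with $q$ involving neither $x_n$ nor $y_n$. Reduce $\partial_{x_{n-1}}F$ modulo $y_{n-1}$ and separate the part divisible by $y_n$. This gives $\partial_{x_{n-1}}A\in(y_{n-1})$ and $c\,\partial_{x_{n-1}}q\in(y_{n-1})$. The first condition cuts out codimension $\binom{m+2n-4}{2n-2}$, which for $m=3$ is exactly $2n-1$, not more.

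The missing observation is that these $p'$ lie in the hyperplane $\PP(p^\perp)$, a family of dimension $2n-2$. Since $\binom{m+2n-4}{2n-2}\ge 2n-1>2n-2$ for all $m\ge3$, the union is still proper; this is precisely the paper's second subcase. Without this observation you would have to extract one more condition from the $cq$-term, which requires controlling $\partial_{p'}q$ modulo $h'$ for every $p'\in p^\perp$.

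For the same reason, your ``critical case'' is misplaced. It is not about $hh'S_{m-2}$ or $n=3$: in the nondegenerate case the margin at $m=3$ is $(n-1)(2n-1)$, whereas in the degenerate case it is $1$ for every $n$.

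Finally, comparing the image of $\Sigma$ with $\dim\mathbb B^m_m$ presupposes generic finiteness. Phrase the argument instead, as your last paragraph effectively does, as $\bigcup_{p'\ne p}\Lambda_{p'}\subsetneq L_C$ for a fixed general $C$.
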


\begin{proof}
Fix a general $C=(H,[q])\in\mathbb Y_m$; since $n\ge3$, the form $q$ is a general form of
degree $m$ in $2n-2\ge4$ variables and is irreducible.  Here
$L_C=(h,q)_m=h\,S_{m-1}\oplus\CC q$, where $S_{m-1}$ denotes the forms of degree $m-1$ on
$\PP^{2n-1}$.  Let $F=hA+cq\in L_C$ be general and suppose that
$C'=(H',[q'])\ne C$ satisfies $F\in L_{C'}$.

If $H'=H$, then $F|_H=cq$ and $q'\mid cq$; for $c\neq0$ irreducibility gives $[q']=[q]$,
and $c=0$ confines $F$ to the proper subspace $hS_{m-1}$.

Assume $H'\ne H$ and $F\supset H'$, say $F=h'G$.  Then $F|_H=(h'|_H)(G|_H)=cq$, and since
the irreducible form $q$ is not divisible by the linear form $h'|_H$, we get $c=0$ and
$G\in(h)$.  Thus $F\in\{hh'G'\}$, a locus of dimension at most
$(2n-1)+\binom{m+2n-3}{2n-1}$, which is smaller than $\dim L_C=\binom{m+2n-2}{2n-1}+1$
because $\binom{m+2n-3}{2n-2}>2n-2$.

Assume finally $H'\ne H$ and $F\not\supset H'$.  Then $F|_{H'}\ne0$ and $[q']=[F|_{H'}]$,
so the condition on $F$ is
$$
F|_{H'}\in\Sym^m\calG_{\sigma,H'} .
$$
For fixed $H'$ this defines the kernel $\Lambda_{H'}\subset L_C$ of the linear map
$$
\varphi_{H'}:L_C\longrightarrow \Sym^m\calQ_{H'}/\Sym^m\calG_{\sigma,H'},
\qquad
R:=\binom{m+2n-3}{2n-2}=\rk\bigl(\Sym^m\calQ/\Sym^m\calG\bigr).
$$
Put $\bar h=h|_{H'}\neq0$.  In coordinates on $H'$ with $\sigma(H')=[1:0:\cdots:0]$, the
elements of $\Sym^m\calG_{\sigma,H'}$ are the forms of $z_0$-degree zero, and the
$z_0$-degree of a product is the sum of the $z_0$-degrees of the factors.

\emph{Case $\sigma(H')\notin H$.}  Then $\bar h$ has $z_0$-degree one, so no nonzero cone
form is divisible by $\bar h$, i.e.\ $\bar h\,\Sym^{m-1}\calQ_{H'}\cap\Sym^m\calG_{\sigma,H'}=0$.
Hence $\varphi_{H'}(hS_{m-1})$ has dimension $\binom{m-1+2n-2}{2n-2}=R$, $\varphi_{H'}$ is
surjective and $\codim_{L_C}\Lambda_{H'}=R$.  These $H'$ form an open subset of dimension
$2n-1$ of $\dualPP^{2n-1}$, so the union of the $\Lambda_{H'}$ has dimension at most
$\dim L_C-R+2n-1<\dim L_C$, because $R\ge\binom{2n}{2n-2}=n(2n-1)>2n-1$ for $m\ge3$.

\emph{Case $\sigma(H')\in H$, $H'\ne H$.}  Since $\sigma(H')\in H$ if and only if
$\omega(v_\ell,v_{\ell'})=0$, these $H'$ form a hyperplane of $\dualPP^{2n-1}$, of dimension
$2n-2$.  Now $\bar h\in\calG_{\sigma,H'}$, and
$\bar h\,\Sym^{m-1}\calQ_{H'}\cap\Sym^m\calG_{\sigma,H'}=\bar h\,\Sym^{m-1}\calG_{\sigma,H'}$,
so that
$$
\dim\varphi_{H'}(hS_{m-1})=\binom{m+2n-3}{2n-2}-\binom{m+2n-4}{2n-3}=\binom{m+2n-4}{2n-2}.
$$
The union of the corresponding $\Lambda_{H'}$ has dimension at most
$\dim L_C-\binom{m+2n-4}{2n-2}+2n-2<\dim L_C$, because
$\binom{m+2n-4}{2n-2}\ge\binom{2n-1}{2n-2}=2n-1$ for $m\ge3$.

Hence a general $F\in L_C$ contains no involutive cone of degree $m$ other than $C$.  Since
$\PP(\widetilde{\mathcal F}^m_m)$ is irreducible, the generic fiber of $\widetilde\rho_m$ is
a single point, and in characteristic zero $\widetilde\rho_m$ is birational.
\end{proof}

\begin{remark}
For $m=2$ one has $R=2n-1$ and the first case of the proof fails, as it must by
Proposition~\ref{prop:quadratic-general}.  For $n=2$ the same argument, with $q$ square-free,
gives the case $d=m$ of Theorem~\ref{thm:P3-birationality}.
\end{remark}

\begin{proposition}[Cubics containing an involutive quadric cone in
\texorpdfstring{$\PP^5$}{P5}]
\label{prop:P5-cubic-quadric}
Let $n=3$ and $(m,d)=(2,3)$.  Then
$\widetilde\rho_2:\PP(\widetilde{\mathcal F}^2_3)\to\mathbb B^2_3$ is birational.
\end{proposition}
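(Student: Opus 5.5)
The plan is to repeat the argument of Proposition~\ref{prop:boundary-birational} with $(n,m,d)=(3,2,3)$. The point is that the condition $F\in L_{C'}$ should be treated as a linear condition on $F$ for each fixed hyperplane $H'$, rather than counting all pairs $(H',[q'])$ as in Theorem~\ref{thm:higher-birationality-criterion}.

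\emph{Setup.} Fix a general $C=(H,[q])\in\mathbb Y_2$ in $\PP^5$. Then $q$ is a quadratic form of rank $4$ in the four coordinates of $H$ that vanish at $\sigma(H)$, so $q$ is irreducible. We have $L_C=(h,q)_3=hS_2+q\,S_1$, and a general element is $F=hA+qB$ with $B$ a general linear form. In particular $B|_H$ does not lie in the subring generated by $\calG_{\sigma,H}$. Suppose $C'=(H',[q'])\ne C$ and $F\in L_{C'}$.

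\emph{Case $H'=H$.} Then $q'\mid qB$ on $H$. Since $q$ is irreducible and $B|_H$ is not a cone form, every degree-two cone factor of $qB$ is a multiple of $q$. Hence $[q']=[q]$, exactly as in the first paragraph of the proof of Theorem~\ref{thm:higher-birationality-criterion}.

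\emph{Case $H'\ne H$ and $F\supset H'$.} Write $F=h'G$. Restricting to $H$ gives $(h'|_H)(G|_H)=qB|_H$. Since $q$ is irreducible, $h'|_H$ is proportional to $B|_H$ and $G|_H$ is proportional to $q$. The forms $F$ of this shape are $h'G$ with $h'\in\langle h,B\rangle$ and $G\in\CC q+hS_1$. This family has dimension about a dozen, far below $\dim L_C=26$, so this case gives a proper closed locus.

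\emph{Case $H'\ne H$ and $F\not\supset H'$.} Then $[q']=[F|_{H'}]$ up to the factor $h'$. I would rather phrase this directly in degree $d=3$: the condition is that $F|_{H'}$ lie in $\bar h'$-free form plus cone part. The cleanest route is to note that $F|_{H'}$ must vanish on the cone $C'$. Because $F$ does not contain $H'$, $F|_{H'}$ is a cubic containing the quadric cone $V_{H'}(q')$, so $F|_{H'}=q'\,\ell'$. To linearize, I fix $H'$ and bound the dimension of the subset $\Lambda_{H'}\subset L_C$ of $F$ whose restriction is divisible by some cone quadric. The expected main tool is again the image of $hS_2$ under restriction: $\bar h\,\Sym^2\calQ_{H'}$ has dimension $15$.

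I would split according to whether $\sigma(H')\in H$, as in Proposition~\ref{prop:boundary-birational}, using the $z_0$-degree with $\sigma(H')=[1:0:\cdots:0]$.
\begin{itemize}
\item If $\sigma(H')\notin H$, then $\bar h$ has $z_0$-degree one. The subspace $\bar h\Sym^2\calQ_{H'}$ meets the locus of cubics of the form (cone quadric)$\times$(linear) only in a set of dimension at most $10+5-1$ in the $35$-dimensional space. So the condition cuts $L_C$ in codimension well above $5=\dim$ of the family of such $H'$.
\item If $\sigma(H')\in H$, the hyperplanes $H'$ form a $4$-dimensional family. Now $\bar h$ is itself a cone form, so the image of $hS_2$ modulo $\bar h\Sym^2\calG_{\sigma,H'}$ still has dimension $15-10=5$. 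The resulting codimension is at least $5>4$.
\end{itemize}

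\emph{Conclusion.} In both subcases the union of the bad loci is a proper subset of $L_C$. Hence a general $F\in L_C$ contains no second involutive quadric cone. Since $\PP(\widetilde{\mathcal F}^2_3)$ is irreducible, the generic fiber of $\widetilde\rho_2$ is a single point, and in characteristic zero $\widetilde\rho_2$ is birational.

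I expect the main obstacle to be the last case. There the condition ``$F|_{H'}$ is divisible by a cone quadric'' is not linear in $F$. I would first try to linearize it by fixing the residual linear factor $\ell'$, which adds $5$ dimensions, and then check that the codimension counts above still exceed the total parameter dimension. If that fails, I would fall back on an explicit Hilbert-function computation of $(h,q)+(h',q')$ in degree $3$, separating the strata where $\sigma(H')\in H$ or $\sigma(H)\in H'$.
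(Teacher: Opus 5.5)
Your reduction is sound. For fixed $H'\ne H$, the forms $F\in L_C$ containing some involutive quadric cone on $H'$ are exactly $X_{H'}=r_{H'}^{-1}(\Sigma_{H'})$. Here $r_{H'}$ is restriction to $H'$ and $\Sigma_{H'}$ is the closed cone of products $q'\ell'$ with $q'\in\Sym^2\calG_{\sigma,H'}$. This also absorbs your case $F\supset H'$.

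The subcase $\sigma(H')\in H$ works. There $\Sigma_{H'}\cap\bar h\Sym^2\calQ_{H'}$ is $\bar h\Sym^2\calG_{\sigma,H'}$ together with the $8$-dimensional set of products $\bar h\lambda\ell'$, $\lambda\in\calG_{\sigma,H'}$. This still has codimension $5$ in the $15$-dimensional space, and $5>4$.

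\textbf{The gap is the subcase $\sigma(H')\notin H$.} These $H'$ form a $5$-dimensional family, so you need $\codim_{L_C}X_{H'}\ge6$, and nothing you write gives that.
\begin{itemize}
\item The number $10+5-1$ is the dimension of all of $\Sigma_{H'}$. The image $r_{H'}(L_C)=(\bar h,\bar q)_3$ has dimension $19$, since the kernel is $h'(h,q)_2$, of dimension $7$. So this count only yields codimension $\ge5$.
\item The method of Proposition~\ref{prop:boundary-birational} gives the same number. Since $\bar h$ has $z_0$-degree one, it divides no cone quadric, so $\Sigma_{H'}\cap\bar h\Sym^2\calQ_{H'}=\bar h\Sym^2\calG_{\sigma,H'}$. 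This has codimension exactly $5$ in $\bar h\Sym^2\calQ_{H'}$.
\end{itemize}
Either way the union of the $X_{H'}$ is only bounded by $26-5+5=26=\dim L_C$, so ``well above $5$'' is unproved. Your proposed linearization (fixing $\ell'$) does not rescue the $hS_2$ argument either: at $\ell'=\bar h$ it again gives codimension $5$ against the $5$ parameters of $H'$.

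\textbf{The missing ingredient} is the part $qS_1$ of $L_C$, combined with an irreducibility argument.
\begin{itemize}
\item If $\sigma(H')\notin H$, then $\sigma(H)\notin K=H\cap H'$. So $q|_K$ has rank four and the ideal $(\bar h,\bar q)$ on $H'$ is prime (bars denote restrictions to $H'$).
\item Hence $q'\ell'\in(\bar h,\bar q)$ forces either $\ell'\in\CC\bar h$ or $q'|_K\in\CC\,q|_K$.
\item Restriction of cone quadrics to $K$ is an isomorphism when $\sigma(H')\notin K$. So the second alternative means $q'\in\CC q''$, where $q''$ is the unique cone quadric with $q''|_K=q|_K$.
\item Thus $\Sigma_{H'}\cap r_{H'}(L_C)=\bar h\Sym^2\calG_{\sigma,H'}\cup q''\calQ_{H'}$ has dimension $10$. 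Therefore $\codim_{L_C}X_{H'}=9$, and the union has dimension at most $22<26$.
\end{itemize}

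With this step supplied, your fibration over $H'$ is a valid alternative to the paper's proof. The paper instead stratifies the pairs $C'$ by the degree $s$ of $\gcd(q|_K,q'|_K)$ and uses $\dim(L_C\cap L_{C'})=H_{C\cap C'}(3)-4$, which is essentially your stated fallback. Your bound $22$ reproduces the paper's $14+8$ for $s=0$.
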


\begin{proof}
Fix a general $C=(H,[q])\in\mathbb Y_2$.  In coordinates $z_0,\dots,z_4$ on $H$ with
$\sigma(H)=[1:0:0:0:0]$, the form $q=q(z_1,\dots,z_4)$ is a quadratic form of rank four; in
particular $q$ is irreducible, and $C$ is the cone with vertex $\sigma(H)$ over a smooth
quadric surface $\Sigma\subset\PP^3$.  Here $A_3(3,2)=30$ and $\dim L_C=26$, and for every
$C'=(H',[q'])\in\mathbb Y_2$ the computation of Lemma~\ref{lem:linear-codim-estimate}, whose
proof does not depend on $n$, gives
$$
\dim(L_C\cap L_{C'})=H_{C\cap C'}(3)-4 .
$$

Let $F=hA+qB\in L_C$ be general, so that $B|_H$ is a general linear form on $H$.  A cone
form $q'\in\Sym^2\calG_{\sigma,H}$ with $q'\mid F|_H=q\,B|_H$ either is irreducible, and then
$q'\mid q$ and $[q']=[q]$, or has a linear factor dividing $B|_H$, which is impossible since
$B|_H$ does not vanish at $\sigma(H)$.  Hence, away from the proper subspace $hS_2$, no
second cone is supported on $H$.

Now let $H'\ne H$, put $K=H\cap H'\simeq\PP^3$, and let $\lambda\in\calQ_H$ be an
equation of $K$ in $H$.  Since $q$ is irreducible, $q|_K\ne0$.  With $s$ the degree of the
greatest common divisor of $q|_K$ and $q'|_K$ (convention $\gcd(f,0)=f$), formula
\eqref{eq:Hilbert-Z} gives $H_{C\cap C'}(3)=12+\binom{s+2}{3}$, that is, $12$, $13$, $16$
for $s=0,1,2$, so that
$$
\dim(L_C\cap L_{C'})=8,\ 9,\ 12\qquad\text{for } s=0,1,2 .
$$
As in the proof of Theorem~\ref{thm:P3-birationality} we stratify the $C'$ with $H'\ne H$;
for each stratum we need its dimension plus $\dim(L_C\cap L_{C'})$ to be smaller than $26$.
We use three facts.
\begin{enumerate}[label=\textup{(\Alph*)}, leftmargin=2.2em]
\item $\sigma(H')\in K$ if and only if $\sigma(H)\in K$ if and only if
  $\omega(v_\ell,v_{\ell'})=0$; the hyperplanes $H'$ with this property form a hyperplane
  $\Pi_H\subset\dualPP^5$.
\item Let $\rho:\Sym^2\calG_{\sigma,H'}\to\Sym^2(\CC[K]_1)$ be the restriction to $K$.  If
  $\sigma(H')\notin K$, the projection from $\sigma(H')$ identifies $K$ with
  $\PP(\widehat{H'}/\CC\sigma(H'))$ and $\rho$ is an isomorphism.  If $\sigma(H')\in K$,
  choose coordinates $z'_0,\dots,z'_4$ on $H'$ with $\sigma(H')=[1:0:\cdots:0]$ and
  $K=\{z'_4=0\}$; then $\rho$ is the substitution $z'_4=0$, its kernel
  $z'_4\,\calG_{\sigma,H'}$ has dimension four, and its image consists of the quadrics on $K$
  that are cones with vertex $\sigma(H')$.
\item If $\sigma(H)\notin K$, the projection from $\sigma(H)$ identifies $K$ with
  $\PP^3_{z_1,\dots,z_4}$ and $q|_K$ with $q$, which has rank four.  If $\sigma(H)\in K$,
  then $\lambda=\lambda(z_1,\dots,z_4)$ defines a plane $P_\lambda\subset\PP^3$, the
  subvariety $K$ is the cone over $P_\lambda$ with vertex $\sigma(H)$, and $q|_K$ is the cone
  over $q|_{P_\lambda}$, which has rank three unless $P_\lambda$ is tangent to $\Sigma$, in
  which case it has rank two.  Hence $q|_K$ is reducible only if $H'\in\Pi_H$ and
  $P_\lambda$ belongs to the dual surface of $\Sigma$; since $H'\mapsto P_\lambda$ is a
  morphism from $\Pi_H\setminus\{H\}$ to the $\dualPP^3$ of planes with one-dimensional
  fibers (the pencils through $K$), these $H'$ form a family of dimension at most three, and
  for them $q|_K=\lambda_1\lambda_2$ with $\lambda_1,\lambda_2$ non-proportional linear forms
  on $K$.
\end{enumerate}

\smallskip
\noindent\emph{(a) $s=0$.}  The stratum has dimension at most $\dim\mathbb Y_2=14$, and
$14+8=22$.

\smallskip
\noindent\emph{(b) $s=1$.}  Then $q|_K$ is reducible, so by (C) the hyperplane $H'$ varies
in a family of dimension at most three and $\sigma(H')\in K$ by (A).  Moreover $\rho(q')$ is
divisible by $\lambda_1$ or by $\lambda_2$, so $\rho(q')$ lies in one of two linear spaces of
dimension four; by (B), $q'$ lies in a linear space of dimension at most $4+4=8$, and $[q']$
varies in a family of dimension at most seven.  Thus $3+7+9=19$.

\smallskip
\noindent\emph{(c) $s=2$ and $q'|_K\ne0$.}  Then $\rho(q')\in\CC\,q|_K$.  If
$\sigma(H')\notin K$, then $\rho$ is injective by (B) and $[q']$ is determined by $H'$: the
stratum has dimension at most five.  If $\sigma(H')\in K$, then $H'\in\Pi_H$ varies in a
family of dimension four and $q'$ lies in a linear space of dimension at most $1+4$, so the
stratum has dimension at most $4+4=8$.  Thus $8+12=20$.

\smallskip
\noindent\emph{(d) $q'|_K=0$.}  Then $C'\supset K$, so the equation $\lambda'$ of $K$ in
$H'$ divides the cone form $q'$; hence $\lambda'$ is itself a cone form, i.e.\
$\sigma(H')\in K$, and $q'=\lambda' g$ with $[g]\in\PP(\calG_{\sigma,H'})\simeq\PP^3$.  The
stratum has dimension at most $4+3=7$, and $7+12=19$.

\smallskip
In every case the bound is smaller than $26=\dim L_C$.  Hence a general $F\in L_C$ contains
no involutive quadric cone other than $C$, and $\widetilde\rho_2$ is birational as in the
proof of Theorem~\ref{thm:higher-birationality-criterion}.
\end{proof}

\begin{theorem}[Birationality in \texorpdfstring{$\PP^5$}{P5}]
\label{thm:P5-birationality}
Let $n=3$, $m\ge2$ and $d\ge m$.  If $(m,d)\ne(2,2)$, then the involutive cone incidence
$\widetilde\rho_m:\PP(\widetilde{\mathcal F}^m_d)\to\mathbb B^m_d$ is birational.  For
$(m,d)=(2,2)$ the image is the full space of quadrics and the generic degree is six.
\end{theorem}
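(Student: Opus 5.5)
The proof will assemble results already established, arranged as a case analysis on $(m,d)$ with $n=3$, $m\ge2$, $d\ge m$. The exceptional case $(m,d)=(2,2)$ is exactly Proposition~\ref{prop:quadratic-general} with $n=3$: the image is all of $\PP(\Sym^2V^\vee)$ and the generic degree is $2n=6$. For every other pair, I will show that one of the following results applies:
\begin{enumerate}[label=\textup{(\arabic*)}]
\item Corollary~\ref{cor:P5-birationality};
\item Proposition~\ref{prop:boundary-birational} with $n=3$;
\item Proposition~\ref{prop:P5-cubic-quadric}.
\end{enumerate}

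\textbf{Step 1 (the generic range).} I will use Corollary~\ref{cor:P5-birationality}. It gives birationality whenever
\[
\binom{d+3}{4}-\binom{d-m+3}{4}>\binom{m+3}{3}+4 .
\]
The corollary already records the ranges where this holds: $m\in\{2,3\}$ with $d\ge4$, $m=4$ with $d\ge5$, and all $m\ge5$ with $d\ge m$.

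For completeness I will recheck the first values. At $(m,d)=(2,4)$ the left side is $35-5=30$ and the right side is $14$. At $(3,4)$ it is $35-1=34$ against $24$. At $(4,5)$ it is $70-1=69$ against $39$. At $(m,d)=(m,m)$ with $m\ge5$, the left side is $\binom{m+3}{4}-0$, and
\[
\binom{m+3}{4}=\binom{m+3}{3}\frac{m}{4}>\binom{m+3}{3}+4
\]
as soon as $m\ge5$.

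Monotonicity in $d$ follows from Pascal's rule. The $d$-increment of the left side is
\[
\binom{d+3}{3}-\binom{d-m+3}{3}\ge0 .
\]
So the inequality, once true at the first listed $d$, persists for all larger $d$.

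\textbf{Step 2 (the residual cases).} Outside the range of Step 1, and after setting aside $(2,2)$, only three pairs remain: $(2,3)$, $(3,3)$ and $(4,4)$.
\begin{itemize}
\item The pairs $(3,3)$ and $(4,4)$ have $d=m\ge3$, so they are covered by Proposition~\ref{prop:boundary-birational} with $n=3$.
\item The pair $(2,3)$ is Proposition~\ref{prop:P5-cubic-quadric}.
\end{itemize}

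\textbf{Main obstacle.} The only delicate point is making sure that this enumeration of leftover pairs is exhaustive. That amounts to checking that the criterion fails exactly at $(2,2)$, $(2,3)$, $(3,3)$ and $(4,4)$, or at least that it holds everywhere else. This reduces to the evaluations and the monotonicity argument in Step 1. Once those hold, the theorem follows by combining the three propositions with the corollary.
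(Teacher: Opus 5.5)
Your proposal is correct and matches the paper's proof: Corollary~\ref{cor:P5-birationality} handles the generic range, Proposition~\ref{prop:boundary-birational} handles $(3,3)$ and $(4,4)$, Proposition~\ref{prop:P5-cubic-quadric} handles $(2,3)$, and Proposition~\ref{prop:quadratic-general} with $n=3$ gives the exceptional case $(2,2)$. Your recheck of the first values and the Pascal-rule monotonicity argument only spell out what the corollary's proof states.
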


\begin{proof}
Corollary~\ref{cor:P5-birationality} covers $m=2,3$ with $d\ge4$, $m=4$ with $d\ge5$, and
$m\ge5$ with $d\ge m$.  Proposition~\ref{prop:boundary-birational} covers $(3,3)$ and $(4,4)$,
and Proposition~\ref{prop:P5-cubic-quadric} covers $(2,3)$.  The last assertion is
Proposition~\ref{prop:quadratic-general}.
\end{proof}

\section{Explicit formulas in \texorpdfstring{$\PP^3$}{P3}}
\label{sec:P3-formulas}

We now specialize to $n=2$, so the ambient space is $\PP^3$. In this case $\mathbb Y_m$
has dimension
$$
\dim\mathbb Y_m=m+3.
$$
The pullback by $s_\sigma$ corresponds, at the level of the Chern-class computation, to the
substitution
$$
\ell=h.
$$
Using \eqref{eq:segre-class} and the projective-bundle relations, one obtains the following
formulas.

\begin{theorem}
\label{thm:P3-formulas}
In $\PP^3$, except for the quadratic boundary case described in
Proposition~\ref{prop:quadratic-general}, the degrees of the varieties $\mathbb B^m_d$ of
surfaces containing an involutive cone of degree $m$ are as follows:
\begin{enumerate}[label=\textup{(\roman*)}, leftmargin=2em]
\item For $m=2$ and $d\ge3$,
$$
\begin{aligned}
\deg(\mathbb B^2_d)
={}&\frac{1}{288}d(d-1)(d^2-d+4)\\
&\times\left(3d^6-2d^5-9d^4+36d^3-72d^2+80d-48\right).
\end{aligned}
$$

\item For $m=3$ and $d\ge3$,
$$
\begin{aligned}
\deg(\mathbb B^3_d)
={}&\frac{1}{10368}(d-2)(d-1)
\bigl(81d^{10}-765d^9+3618d^8-9062d^7\bigr.\\
&\qquad\bigl.+5925d^6+44671d^5-194088d^4+426820d^3\\
&\qquad\bigl.-586512d^2+495360d-207360\bigr).
\end{aligned}
$$

\item For $m=4$ and $d\ge4$,
$$
\begin{aligned}
\deg(\mathbb B^4_d)
={}&\frac{1}{10368}(d-3)(d-2)(d^2-5d+12)\\
&\cdot\bigl(27d^{10}-441d^9+3366d^8-14926d^7+38239d^6\\
&\qquad -34201d^5-129764d^4+590888d^3\\
&\qquad -1162364d^2+1246584d-604800\bigr).
\end{aligned}
$$
\end{enumerate}
\end{theorem}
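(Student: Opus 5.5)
The plan is to combine Theorem~\ref{thm:degree-involutive} with Theorem~\ref{thm:P3-birationality}. Birationality holds for every $(m,d)$ with $d\ge m\ge2$ other than $(2,2)$, so each $\deg(\mathbb B^m_d)$ equals
$$
\int_{\mathbb Y_m}s_{m+3}(\widetilde{\mathcal F}^m_d)\cap[\mathbb Y_m],
$$
and the theorem reduces to evaluating this Segre-class integral in closed form for $m=2,3,4$. In $\PP^3$ the space $\mathbb Y_m=\PP(\Sym^m\calG_\sigma)$ is a $\PP^m$-bundle over $\dualPP^3$. Here $\calG_\sigma$ has rank two, and the sequence \eqref{eq:G-sequence} pulled back by $s_\sigma$ gives $0\to\calG_\sigma\to\calQ\to s_\sigma^*\calO_{\calQ^\vee}(1)\to0$. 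We have $s_\sigma^*\ell=h$, because the polarity $\sigma$ is linear, so $\sigma^*\calO_{\PP^3}(1)=\calO_{\dualPP^3}(1)$. This gives $c(\calG_\sigma)=c(\calQ)/(1+h)=1/(1-h)(1+h)=1/(1-h^2)$, hence $c_1(\calG_\sigma)=0$ and $c_2(\calG_\sigma)=h^2$. This is the only geometric input beyond \eqref{eq:segre-class}.

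The computation then runs in three steps. First, write $\Sym^m\calG_\sigma$ via the splitting principle with Chern roots $\pm\alpha$, $\alpha^2=-h^2$. The relation \eqref{eq:third-relation} becomes $\prod_{k=0}^m\bigl(x+(m-2k)\alpha\bigr)=0$, i.e.\ $x^{m+1}$ equals an explicit polynomial in $x$ and $h^2$. Since $h^4=0$, only terms up to $h^3$ survive, so this relation is short for each $m$. Second, expand \eqref{eq:segre-class} as a power series in $h$ and $x$. The exponents $e_1=\binom{d+2}{3}$, $e_m$ and $e_{m+1}$ are polynomials in $d$ of degree three, so each coefficient of $h^ix^j$ is a polynomial in $d$ built from binomials $\binom{e+k-1}{k}$ or $\binom{e}{k}$. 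Third, extract the degree-$(m+3)$ part, push forward to $\dualPP^3$ using the relation from the first step (equivalently $\pi_*x^{m+j}=s_j(\Sym^m\calG_\sigma)$ with the appropriate sign convention), and then take the coefficient of $h^3$. The result is a polynomial in $d$ of degree $3(m+3)$, which matches the degrees $12$, $14$, $16$ of the displayed formulas for $m=2,3,4$.

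The main obstacle is the bookkeeping needed to obtain exact polynomials in $d$ and then factor them. I would handle it by computing the Segre classes $s_j(\Sym^m\calG_\sigma)$ for $j\le3$ explicitly: only $s_0=1$ and $s_2$ are nonzero, with $s_2$ a multiple of $h^2$ equal to $-c_2(\Sym^m\calG_\sigma)$, where
$$
c_2(\Sym^m\calG_\sigma)=\tfrac{m(m+1)(m+2)}{6}\,c_2(\calG_\sigma).
$$
This collapses the pushforward to two terms, namely the $x^{m+3}$ and $x^{m+1}h^2$ coefficients combined with the $h^3$ coefficient. I would then verify the resulting polynomial identities by interpolation. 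Both sides are polynomials in $d$ of degree at most $3(m+3)$, so it suffices to check agreement at $3m+10$ values of $d$. Those values can be computed exactly, which is the computer-algebra check the paper refers to. The factors $(d-1)$, $(d-2)$, $(d-3)$ provide sanity checks: they reflect vanishing at small $d$, where the incidence degenerates and the integral is zero by Corollary~\ref{cor:nonvanishing-finite}.
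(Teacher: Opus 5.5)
Your route is the paper's: Theorem~\ref{thm:P3-birationality} and Theorem~\ref{thm:degree-involutive} reduce the statement to the Segre integral over $\mathbb Y_m$. That integral is evaluated with $\ell=h$ and $c(\calG_\sigma)=1/(1-h^2)$ modulo $(h^4,R_m)$ and certified by exact arithmetic; interpolating instead of expanding symbolically in $d$ makes no difference. Your Chern data are right: modulo $h^4$, $c(\Sym^m\calG_\sigma)\equiv1+\binom{m+2}{3}h^2$ and $s(\Sym^m\calG_\sigma)\equiv1-\binom{m+2}{3}h^2$.

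\textbf{The collapse you then state is wrong.} Let $C_{p,q}(d)$ be the coefficient of $h^px^q$ in \eqref{eq:segre-class}. The pushforward sends $h^px^{m+3-p}$ to $h^p\,s_{3-p}(\Sym^m\calG_\sigma)$. Since $s_1=s_3=0$, the two monomials you single out, $x^{m+3}$ and $h^2x^{m+1}$, are exactly the ones that contribute nothing. The surviving monomials are $h^3x^m$ and $hx^{m+2}$, so
\[
\int_{\mathbb Y_m}s_{m+3}(\widetilde{\mathcal F}^m_d)\cap[\mathbb Y_m]
=C_{3,m}(d)-\binom{m+2}{3}\,C_{1,m+2}(d).
\]
This is the $\PP^3$ analogue of \eqref{eq:P5-degree-formula}. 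Two checks:
\begin{itemize}
\item At $(m,d)=(2,3)$: $e_1=10$, $e_2=4$, $e_3=1$, $C_{3,2}=770$, $C_{1,4}=115$, and $770-4\cdot115=310$ agrees with item~(i).
\item At $(m,d)=(3,3)$: $220-10\cdot10=120$, which agrees with item~(ii).
\end{itemize}
Carried out as you wrote it, the computation would not reproduce the stated polynomials. This step must be fixed before the interpolation check means anything.

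\textbf{The degree bookkeeping is also off, though this is harmless.} $3(m+3)$ equals $15,18,21$, not $12,14,16$. The displayed polynomials have degrees $10,12,14$, that is, $2(m+3)$. Still, $3(m+3)$ is a valid upper bound: each $C_{p,q}$ is a sum of products of binomials of total degree $p+q$ in the cubic quantities $e_i$. So once the collapse is corrected, interpolation at $3m+10$ integers $d\ge m$ does certify the identities. The sharp bound $2(m+3)$ comes from rewriting \eqref{eq:segre-class} as
\[
(1-h)^{-\Delta_h}(1-x)^{-\Delta_x}\bigl(1-hx/((1-h)(1-x))\bigr)^{E},
\]
with $\Delta_h,\Delta_x$ quadratic in $d$, as in Section~\ref{sec:reproducible-computation}.
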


\begin{remark}
For $(m,d)=(2,2)$, the first formula gives the incidence degree $4$, not the degree of the
image. Proposition~\ref{prop:quadratic-general} shows that the image is the full space of
quadrics, hence has degree $1$. For all other cases covered above,
Theorem~\ref{thm:P3-birationality} identifies the incidence degree with the degree of the
image.
\end{remark}

\section{Explicit computations in \texorpdfstring{$\PP^5$}{P5}}
\label{sec:P5-computations}

In $\PP^5$ the cones are no longer pencils of lines.  We give the degrees in coefficient
form rather than as polynomials in $d$; even for $m=2$ the polynomial has degree $56$.

Let $n=3$.  On the involutive base, the pulled-back bundle $\calG$ has rank four and
$$
c(\calG)=\frac{1}{(1-h)(1+h)}=1+h^2+h^4
$$
modulo $h^6=0$.  Put
$$
r_m=\binom{m+3}{3}.
$$
For $m=2,3,4$, write
$$
c(\Sym^m\calG)=1+a_mh^2+b_mh^4.
$$
A computation with Chern roots gives
$$
(r_2,a_2,b_2)=(10,6,21),\qquad
(r_3,a_3,b_3)=(20,21,231),
$$
and
$$
(r_4,a_4,b_4)=(35,56,1596).
$$
Thus
$$
s(\Sym^m\calG)=1-a_mh^2+(a_m^2-b_m)h^4
$$
modulo $h^6$.

Set
$$
e_1=\binom{d+4}{5},\qquad
 e_m=\binom{d-m+5}{5},\qquad
 e_{m+1}=\binom{d-m+4}{5},
$$
and define
$$
S_{d,m}(h,x)=
\frac{1}{(1-h)^{e_1}}\cdot
\frac{(1-h-x)^{e_{m+1}}}{(1-x)^{e_m}}.
$$
For integers $p,q$, let
$$
C_{p,q}^{(m)}(d)=[h^px^q]S_{d,m}(h,x).
$$
Equivalently,
\begin{equation}
\label{eq:Cpq-finite-sum}
\begin{aligned}
C_{p,q}^{(m)}(d)
=\sum_{\alpha=0}^{p}\sum_{\beta=0}^{q}&(-1)^{\alpha+\beta}
\binom{e_1+p-\alpha-1}{p-\alpha}
\binom{e_{m+1}}{\alpha+\beta}\\
&\times\binom{\alpha+\beta}{\alpha}
\binom{e_m+q-\beta-1}{q-\beta}.
\end{aligned}
\end{equation}
with the convention that binomial coefficients outside their natural range are zero.

The incidence degree in $\PP^5$ is then
\begin{equation}
\label{eq:P5-degree-formula}
D_m(d)=
C_{5,r_m-1}^{(m)}(d)
-a_m C_{3,r_m+1}^{(m)}(d)
+(a_m^2-b_m)C_{1,r_m+3}^{(m)}(d).
\end{equation}
By Theorem~\ref{thm:P5-birationality} this is $\deg\mathbb B^m_d$ for $(m,d)\ne(2,2)$,
while $D_2(2)=6$ is the generic degree of Proposition~\ref{prop:quadratic-general}. The first
values are:
$$
\begin{array}{c|c|l}
 m & d & \deg(\mathbb B^m_d)\\
\hline
2 & 3 & 8\,885\,436\\
2 & 4 & 1\,311\,383\,133\,816\\
2 & 5 & 40\,592\,918\,532\,191\,508\\
3 & 3 & 20\,349\\
3 & 4 & 23\,210\,548\,536\\
3 & 5 & 119\,852\,371\,930\,648\,140\\
4 & 4 & 3\,819\,816\\
4 & 5 & 13\,399\,260\,046\,536\\
4 & 6 & 2\,703\,400\,709\,390\,290\,025\,480
\end{array}
$$

For $m=2$ the polynomial $D_2(d)$ factors as follows.

\begin{proposition}[Quadric cones in $\PP^5$]\label{prop:P5-quadric-expanded}
For $m=2$, put $\Delta=\binom{d+2}{4}=\rk\Sym^{d-2}\calQ$. The incidence degree $D_2(d)$ in
$\PP^5$ is
$$
D_2(d)=
\frac{1}{2^{36}\,3^{14}\,5^{6}\cdot 7\cdot 11\cdot 13}\;
\Delta\,(\Delta+5)(\Delta+6)(\Delta+7)(\Delta+8)\,P_{36}(d),
$$
where $P_{36}(d)$ is the primitive integer polynomial of degree $36$, with leading
coefficient $1\,049\,375$, supplied in \texttt{P36\_coefficients.txt} (Online Resource~1).
The polynomial $P_{36}$ is irreducible over $\mathbb Q$. For every $d\ge3$, $D_2(d)$ is the
degree of $\mathbb B^2_d$.
\end{proposition}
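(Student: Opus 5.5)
The plan has three parts: show that $D_2(d)$ is a polynomial in $d$ of degree at most $56$, determine it exactly by interpolation, and then read off the factorization and the irreducibility of the last factor.

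\emph{Polynomiality.} Start from \eqref{eq:P5-degree-formula} with $m=2$, $r_2=10$, $a_2=6$, $b_2=21$, so that
$$
D_2(d)=C^{(2)}_{5,9}(d)-6\,C^{(2)}_{3,11}(d)+15\,C^{(2)}_{1,13}(d).
$$
By \eqref{eq:Cpq-finite-sum}, each $C^{(2)}_{p,q}(d)$ is a finite sum of products of binomial coefficients. Each factor has the form $\binom{e+k}{k}$ or $\binom{e}{j}$ with $k,j$ fixed and $e\in\{e_1,e_2,e_3\}$. Each $e_i$ is a polynomial in $d$ of degree $5$. The factor $\binom{e_3}{\alpha+\beta}$ is defined for all $d$ and agrees with the polynomial $e_3(e_3-1)\cdots/(\alpha+\beta)!$, which vanishes correctly when $e_3$ is smaller than $\alpha+\beta$. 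Hence the out-of-range convention causes no trouble, and $D_2(d)$ is a polynomial in $d$ with rational coefficients.

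For the degree bound, each monomial has total weight $p+q=14$ in the binomial indices. A product of binomials in the $e_i$ whose indices sum to $14$ has degree at most $5\cdot14=70$ in $d$. The actual degree is lower because of the cancellation built into $(1-h-x)^{e_3}/(1-x)^{e_2}$: since $e_2-e_3=\binom{d+2}{4}=\Delta$, this quotient equals $(1-x)^{-\Delta}$ times a factor that is close to $1$. To exploit this, expand $S_{d,2}$ by writing $1-h-x=(1-x)\bigl(1-h/(1-x)\bigr)$, giving
$$
S_{d,2}=(1-h)^{-e_1}(1-x)^{-\Delta}\Bigl(1-\frac{h}{1-x}\Bigr)^{e_3}.
$$
In this form every coefficient is a sum of products of binomials in $e_1$, $e_3$ (degree $5$ in $d$) and in $\Delta$ (degree $4$ in $d$). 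Since the $h$-index is at most $5$, a direct count bounds the degree by $56$, which is the claimed degree.

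\emph{Interpolation and the factors in $\Delta$.} Given the degree bound, $D_2$ is determined by its values at $57$ integers. I would compute these exactly from \eqref{eq:Cpq-finite-sum}, together with a few extra values as a check, and interpolate. The factors $\Delta(\Delta+5)(\Delta+6)(\Delta+7)(\Delta+8)$ should then be verified by exact polynomial division. Conceptually they should come from the $x$-direction: the top $x$-powers $q=9,11,13$ enter through $\binom{\Delta+q-\beta-1}{q-\beta}$-type terms, and after combining these, a common factor $\Delta(\Delta+5)\cdots(\Delta+8)$ survives. I expect the paper to explain this later, but for the proof exact division suffices. The constant $2^{36}3^{14}5^6\cdot7\cdot11\cdot13$ and the leading coefficient $1\,049\,375$ are then read off by clearing denominators and making $P_{36}$ primitive. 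As a consistency check, the total degree is $5\cdot4+36=56$.

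\emph{Irreducibility and the geometric statement.} For irreducibility of $P_{36}$ over $\mathbb Q$ I would reduce modulo a few primes not dividing the leading coefficient and compare the degree patterns of the factorizations. If these patterns are incompatible with any nontrivial splitting, irreducibility follows. Failing that, I would fall back on a certified factorization in a computer algebra system. The last sentence of the statement is Theorem~\ref{thm:P5-birationality} combined with Theorem~\ref{thm:degree-involutive}: for $d\ge3$ the map $\widetilde\rho_2$ is birational, so the Segre integral \eqref{eq:P5-degree-formula} equals $\deg\mathbb B^2_d$.

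The main obstacle is making the degree-$56$ bound rigorous, since the interpolation is only valid once polynomiality and this bound are established. The irreducibility check is the other point that relies on certified computer algebra rather than a conceptual argument.
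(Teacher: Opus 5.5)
Most of your plan follows the paper's own verification: polynomiality, an a priori degree bound, exact interpolation, division by $\Delta(\Delta+5)\cdots(\Delta+8)$, certified irreducibility, and Theorems~\ref{thm:P5-birationality} and~\ref{thm:degree-involutive} for the last sentence. The gap is in the step you yourself flag as the main obstacle: the degree bound does not follow from your count. In your form $S_{d,2}=(1-h)^{-e_1}(1-x)^{-\Delta}\bigl(1-\frac{h}{1-x}\bigr)^{e_3}$, the coefficient of $h^px^q$ is
$$
\sum_{k=0}^{p}(-1)^k\binom{e_3}{k}\binom{e_1+p-k-1}{p-k}\binom{\Delta+k+q-1}{q}.
$$
On the $h$-side both $e_1$ and $e_3$ have degree $5$, so a termwise count gives degree $5k+5(p-k)+4q=5p+4q=56+p$. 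For $C^{(2)}_{5,9}$ this is $61$, not $56$. The true degree $56$ comes from cancellation between $e_1$ and $e_3$, which both begin with $d^5/120$. A direct count in your form cannot see that cancellation, so interpolating from $57$ values is not justified by what you wrote.

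The repair is to split off the $h$-direction as well, as the paper does in Section~\ref{sec:reproducible-computation}. Put $\Delta_h=e_1-e_3=\binom{d+3}{4}+\binom{d+2}{4}$, which has degree $4$. Then
$$
S_{d,2}=(1-h)^{-\Delta_h}(1-x)^{-\Delta}\Bigl(1-\frac{hx}{(1-h)(1-x)}\Bigr)^{e_3},
$$
and the coefficient of $h^px^q$ becomes $\sum_k(-1)^k\binom{e_3}{k}\binom{\Delta_h+p-1}{p-k}\binom{\Delta+q-1}{q-k}$. Its degree is at most $5k+4(p-k)+4(q-k)=4(p+q)-3k\le 56$, because the degree-$5$ quantity $e_3$ now enters only paired with $hx$. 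Alternatively, keep your weaker but correct bound $61$ and interpolate from $62$ values. Either way, the rest of your plan then goes through unchanged.
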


The factors $\Delta+5,\dots,\Delta+8$, and the factor
$d^2-5d+12=2\bigl(\binom{d-2}{2}+3\bigr)$ of $\deg\mathbb B^4_d$ in $\PP^3$, have a common
explanation.

\begin{remark}[Shifted-product factors]
\label{rem:shifted-factors}
Return to $\PP^{2n-1}$ and write $\Delta_h=e_1-e_{m+1}$, $\Delta_x=e_m-e_{m+1}=\rk\Sym^{d-m}\calQ$
and $E=e_{m+1}$, so that, by \eqref{eq:segre-class},
$$
\begin{aligned}
s(\widetilde{\mathcal F}^m_d)&=(1-h)^{-\Delta_h}(1-x)^{-\Delta_x}
\Bigl(1-\frac{hx}{(1-h)(1-x)}\Bigr)^{E}\\
&=\sum_{k\ge0}\binom{E}{k}(-1)^k\,h^kx^k\,(1-h)^{-\Delta_h-k}(1-x)^{-\Delta_x-k}.
\end{aligned}
$$
The coefficient of $h^px^q$ is therefore
$$
\sum_{k\ge0}\binom{E}{k}(-1)^k\binom{\Delta_h+p-1}{p-k}\binom{\Delta_x+q-1}{q-k},
\qquad
\binom{\Delta_x+q-1}{q-k}=\frac{1}{(q-k)!}\prod_{i=k}^{q-1}(\Delta_x+i).
$$
Only the monomials with $q\ge r_m-1$ and $p\le2n-1$ contribute to the integral over
$\mathbb Y_m$, and $k\le p$; hence every contributing term is divisible by
$\prod_{i=2n-1}^{r_m-2}(\Delta_x+i)$.  Moreover, as a polynomial in $d$, the integral vanishes
at $d=m-1,\dots,m-2n+2$, where $e_m=e_{m+1}=0$ and the Segre class does not involve $x$; these
are exactly the roots of $\Delta_x$.  Consequently
$$
\Delta_x\prod_{i=2n-1}^{r_m-2}(\Delta_x+i)\ \Big|\ \widetilde\delta_m(d)\deg(\mathbb B^m_d)
\qquad\text{in }\mathbb Q[d],
$$
and, since the incidence degree has degree at most $(2n-2)\dim\mathbb Y_m$ in $d$ (see
Section~\ref{sec:reproducible-computation}), the remaining factor has degree at most
$(2n-2)(4n-3)$, a bound independent of $m$.  In $\PP^5$ this accounts for the factors
$\Delta(\Delta+5)\cdots(\Delta+8)$ of $D_2$; for $m=3$ and $m=4$ the same computation, included
in Online Resource~1, shows that $D_m(d)$ is the product of
$\Delta_x\prod_{i=5}^{r_m-2}(\Delta_x+i)$ and a polynomial of degree $36$ which is again
irreducible over $\mathbb Q$.  In $\PP^3$ the argument explains the factor $\Delta_x+3$ of
$\deg\mathbb B^4_d$, but not the factor $d^2-d+4=2\bigl(\binom d2+2\bigr)$ of
$\deg\mathbb B^2_d$.
\end{remark}

\begin{remark}
Since the values in the table are nonzero, Corollary~\ref{cor:nonvanishing-finite} shows,
independently of Section~\ref{sec:higher-birationality}, that the incidence projections are
generically finite in these cases.
\end{remark}

\section{Computational verification}
\label{sec:reproducible-computation}

The formulas in Sections~\ref{sec:P3-formulas} and~\ref{sec:P5-computations} were checked with
exact arithmetic.  In $\PP^3$, after pulling back by the polarity section, one has
$$
c(\calG)=\frac{1}{(1-h)(1+h)}=\frac{1}{1-h^2},
\qquad c_1(\calG)=0,
\qquad c_2(\calG)=h^2.
$$
If $x=c_1(\calO_{\Sym^m\calG}(1))$, the calculation takes place in
$$
\mathbb Q[d,h,x]/(h^4,R_m),
$$
where
$$
R_m=c_{m+1}\bigl(\Sym^m\calG\otimes\calO_{\Sym^m\calG}(1)\bigr).
$$
The exact symbolic script expands every generalized binomial coefficient as a polynomial in
$d$, reduces the Segre class modulo $(h^4,R_m)$, and compares the resulting polynomial
with the closed formulas.  It returns
$$
\begin{array}{c|c|c}
 m & \deg_d & \text{symbolic difference}\\ \hline
2&10&0\\
3&12&0\\
4&14&0
\end{array}
$$
in $\mathbb Q[d]$.

The degree bound used in Section~\ref{sec:P5-computations} follows from the coefficient
formula. Set
$$
E=e_{m+1},\qquad \Delta_h=e_1-e_{m+1},\qquad \Delta_x=e_m-e_{m+1}=\binom{d-m+4}{4}.
$$
As polynomials in $d$, one has
$$
\deg_dE=5,\qquad \deg_d\Delta_h\le4,\qquad \deg_d\Delta_x\le4.
$$
Moreover,
$$
S_{d,m}(h,x)=
(1-h)^{-\Delta_h}(1-x)^{-\Delta_x}
\left(\frac{1-h-x}{(1-h)(1-x)}\right)^E.
$$
The last quotient satisfies
$$
\frac{1-h-x}{(1-h)(1-x)}
=1-\frac{hx}{(1-h)(1-x)},
$$
so its nonconstant part has total $(h,x)$-degree at least two. Consequently, its component
of total degree $t$ is a polynomial in $E$ of degree at most $\lfloor t/2\rfloor$. The
coefficient of $h^i$ in $(1-h)^{-\Delta_h}$ is
$\binom{\Delta_h+i-1}{i}$, and similarly for the $x$-factor; hence a component of total
degree $s$ in their product has degree in $d$ at most $4s$. Since the three coefficients
occurring in \eqref{eq:P5-degree-formula} have $p+q=r_m+4=\dim\mathbb Y_m$, a
contribution whose last factor has total degree $t$ has degree in $d$ at most
$$
4(r_m+4-t)+5\left\lfloor\frac{t}{2}\right\rfloor\le4(r_m+4).
$$
Thus $\deg_dD_m(d)\le4\dim\mathbb Y_m$, which is $56$, $96$ and $156$ for $m=2,3,4$; the
same argument in $\PP^{2n-1}$ gives the bound $(2n-2)\dim\mathbb Y_m$ used in
Remark~\ref{rem:shifted-factors}. For $m=2$ the factorized expression of
Proposition~\ref{prop:P5-quadric-expanded} also has degree at most
$$
4+4\cdot4+36=56.
$$
The verifier included in Online Resource~1 evaluates both the coefficient formula and the
factorized expression at the $57$ integers
$$
d=2,3,\ldots,58.
$$
All values agree exactly; the degree bound therefore proves the polynomial identity. The
coefficient file \texttt{P36\_coefficients.txt}, included in Online Resource~1, records the
$37$ integer coefficients of $P_{36}$; the verifier also checks that they are coprime and
that $P_{36}$ is irreducible over $\mathbb Q$, using the exact factorization routines of
SymPy. A second script recovers $D_m(d)$ for $m=2,3,4$ as a polynomial by exact
interpolation from $4\dim\mathbb Y_m+1$ integer values, divides it by
$\Delta_x\prod_{i=5}^{r_m-2}(\Delta_x+i)$, and factors the quotient, confirming the
statements of Remark~\ref{rem:shifted-factors}.

Online Resource~1 contains:
\begin{itemize}[leftmargin=2em]
\item \path{verify_involutive_cones_p3_exact.py}, which proves the three $\PP^3$ identities
  symbolically;
\item \path{compute_involutive_cones_p5.py}, which evaluates the compact $\PP^5$ formula
  with exact integer arithmetic;
\item \path{verify_p36_factor.py} and \path{P36_coefficients.txt}, which certify the
  factorization of Proposition~\ref{prop:P5-quadric-expanded};
\item \path{verify_shifted_factors.py}, which certifies the factorizations described in
  Remark~\ref{rem:shifted-factors} for $m=2,3,4$;
\item companion SageMath scripts and recorded Python outputs.
\end{itemize}

\section{Concluding remarks}

The polarity condition gives a particularly rigid compactification: the free point in the
ordinary cone incidence is replaced by the canonical section $H\mapsto\sigma(H)$.  In the
boundary case $d=m$, the condition that the polar hyperplane section have multiplicity $m$
at $\sigma(H)$ is equivalent, away from the locus of hypersurfaces containing $H$, to the
cone condition.  For $d>m$, global divisibility by a cone equation is strictly stronger than
the corresponding jet condition.  This suggests looking at involutive contact loci, which we
do not do here.

We do not know the exact birational range in $\PP^{2n-1}$ for $n\ge4$, nor what the
irreducible polynomials left over in Remark~\ref{rem:shifted-factors} mean geometrically.

\backmatter

\bmhead{Supplementary information}
\textbf{Online Resource 1.} Exact-arithmetic verification package for the formulas in
$\PP^3$ and $\PP^5$, containing Python and SageMath scripts, recorded Python outputs, and
the coefficient table for $P_{36}(d)$.

\bmhead{Acknowledgements}
The author thanks Professor Marcelo Pedro for his encouragement and collegial support. He
also thanks Professor Andr\'e Meireles for his guidance during the doctoral research and
Professor Israel Vainsencher for his decisive role in the formation of a generation of
algebraic geometers in Brazil.

\section*{Statements and Declarations}

\subsection*{Funding}
This research received no specific grant from any funding agency in the public, commercial,
or not-for-profit sectors.

\subsection*{Competing interests}
The author has no relevant financial or non-financial interests to disclose.

\subsection*{Author contributions}
The sole author conceived the study, developed the mathematics and software, performed the
computations and verification, and wrote and revised the manuscript.

\subsection*{Data availability}
No external datasets were used in this study. The exact-arithmetic scripts, coefficient data,
and recorded execution outputs supporting the computations are provided in Online
Resource~1.

\subsection*{Code availability}
The exact-arithmetic Python and SageMath source files and the execution outputs used to
reproduce the computational results are included in Online Resource~1.

\subsection*{Use of artificial intelligence}
The codimension-two converse, the incidence construction, and several low-dimensional
computations were developed in the author's 2014 doctoral thesis. During the preparation of
this article, ChatGPT (OpenAI) assisted with manuscript organization, language revision, and
the development and checking of symbolic-computation code. Claude (Anthropic) was used to
assist in updating and adapting the computational algorithms for implementation in Python and
SageMath, to audit the manuscript and cross-check the computations by independent
implementations, and to suggest the refinements that led to
Propositions~\ref{prop:boundary-birational} and~\ref{prop:P5-cubic-quadric} and to
Remark~\ref{rem:shifted-factors}, whose proofs the author verified. The author independently
reviewed and verified every mathematical statement, proof, reference, algorithm, computation,
and generated output, edited the final text, and takes full responsibility for the content of
the article.

\end{document}